\newtheorem{defi}{Definition}[section]
\newtheorem{teo}[defi]{Theorem}
\newtheorem{cor}[defi]{Corollary}
\newtheorem{lem}[defi]{Lemma}
\newtheorem{prop}[defi]{Proposition}
\newtheorem{rem}[defi]{Remark}
\newtheorem{nota}[defi]{Notation}
\newtheorem{defin}[defi]{Definition}
\author{Marco Antei}
\title{On the Abelian Fundamental Group Scheme of a Family of Varieties}
\begin{document}
\maketitle

\noindent \textbf{Abstract.} Let $S$ be a connected Dedekind scheme and  $X$ an $S$-scheme provided with a section $x$. We prove that the morphism between fundamental group schemes $\pi_1(X,x)^{ab}\to  \pi_1(\mathbf{Alb}_{X/S},0_{\mathbf{Alb}_{X/S}})$ induced by the canonical morphism from $X$ to its Albanese scheme $\mathbf{Alb}_{X/S}$ (when the latter exists) fits in an exact sequence of group schemes $0\to  (\mathbf{NS}^{\tau}_{X/S})^{\vee}\to  \pi_1(X,x)^{ab}\to \pi_1(\mathbf{Alb}_{X/S},0_{\mathbf{Alb}_{X/S}}) \to  0$ where the kernel is a finite and flat $S$-group scheme. Furthermore we prove  that any finite and commutative quotient pointed torsor over the generic fiber $X_{\eta}$ of $X$ can be extended to a finite and commutative pointed torsor over $X$. 
\medskip
\\\indent \textbf{Mathematics Subject Classification}: Primary: 14H30, 14K30, 14L15. Secondary: 11G99.\\\indent
\textbf{Key words}: torsors, fundamental group scheme, Jacobian, Albanese scheme.

\tableofcontents
\bigskip

\section{Introduction}\label{sez:Intro} A classical result states that the abelianized \'etale fundamental group of a complete smooth curve  over a separably closed field is isomorphic to the \'etale fundamental group of its Jacobian (cf. \cite{Mil2}, \S 9).  In this paper we will generalize this result in the language of the fundamental group scheme introduced by Nori (cf. \cite{Nor} and \cite{Nor2}) for schemes over fields then extended by Gasbarri (cf. \cite{Gas}) for schemes over Dedekind schemes. In particular in \textbf{Corollary \ref{corCurve}} we prove  that if $f:C\to S$ is a smooth and projective curve with integral geometric fibers endowed with a $S$-valued point $x\in C(S)$ then the natural morphism $\varphi:\pi_1(C,x)^{ab}\to \pi_1(J,0_J)$ from the abelianized fundamental group scheme of $C$ to the fundamental group scheme of its Jacobian is an isomorphism. This is a consequence of a more general statement (cf. \textbf{Theorem \ref{teoUnico}}) in higher dimension: if we replace $C$ by a scheme of finite type $X$ and $J$ by $\mathbf{Alb}_{X/S}$, the Albanese scheme of $X$  (provided it exists), then the morphism of fundamental group schemes $\pi_1(X,x)^{ab}\to  \pi_1(\mathbf{Alb}_{X/S},0_{\mathbf{Alb}_{X/S}})$ induced by the canonical morphism $X\to \mathbf{Alb}_{X/S}$  fits in an exact sequence of group schemes $0\to  (\mathbf{NS}^{\tau}_{X/S})^{\vee}\to  \pi_1(X,x)^{ab}\to \pi_1(\mathbf{Alb}_{X/S},0_{\mathbf{Alb}_{X/S}}) \to  0$ where the kernel is  the Cartier dual of a $S$-finite and smooth group scheme $\mathbf{NS}_{X/S}^{\tau}$, called torsion N\'eron-Severi scheme of $X$ over $S$;  when $S$ is the spectrum of an algebraically closed field $k$ of characteristic $0$ this coincides with a classical statement (cf. \cite{Mil}, III, \S 4, Corollary 4.19 and \cite{Szamuely}, \S 5.8) whose main techniques are used here to solve our problem.\\

 As already discussed in \cite{Antei} the study of the fundamental group scheme is tightly related to the problem of extending finite and pointed torsors over the generic fiber $X_{\eta}$ of $X$ to torsors over $X$. Here we concentrate our attention on commutative torsors. We already know that if $X$ is an abelian scheme every (necessarily) commutative quotient (i.e. the group scheme acting on it is a quotient of the fundamental group scheme of $X_{\eta}$) pointed torsor over $X_{\eta}$ can be extended to $X$ (cf. \cite{Antei}, \S 3.2). Tossici recently proved in \cite{Tos}, Corollary 4.9 that if $S=Spec(R)$, where $R$ is a d.v.r. of mixed characteristic and $X$ is a normal scheme, faithfully flat over $S$ with integral fibers then every commutative finite torsor (pointed or not) $Y'\to X_{\eta}$ with $Y'$ connected such that the normalization $Y$ of $X$ in $Y'$ has integral special fiber can be extended to a commutative finite torsor over $X$ up to an extension of $R$. Here we prove (cf. \textbf{Theorem \ref{teoEstensioneTorsoriCommutativi}}) that for any Dedekind scheme $S$, under some existence assumptions for $\mathbf{Pic}_{X/S}^{0}$ and for $\mathbf{Pic}_{X/S}^{\tau}$,  every commutative finite pointed torsor  $Y'\to X_{\eta}$ can be extended to a commutative finite pointed torsor  over $X$. We do not need to extend the base scheme $S$ and we do not need other assumptions on $Y'$, but of course we may have problems while trying to solve existence problems for $\mathbf{Pic}_{X/S}^{0}$ and $\mathbf{Pic}_{X/S}^{\tau}$. In section \ref{sez:Effective} we provide some examples where everything behaves well.

\begin{nota} Throughout this paper every scheme is supposed to be locally noetherian. We denote by $Sch/S$ the category of (locally noetherian) schemes over a base scheme $S$. If $X$ is a $S$-scheme, a torsor over $X$ under a finite and flat $S$-group scheme $G$ will be a finite, faithfully flat and $G$-invariant morphism $p: Y \to X$, locally trivial for the fpqc topology, where $Y$ is a $S$-scheme endowed with a right action of $G$.\end{nota}
\indent \textbf{Acknowledgements}
I would like to thank the Max-Planck-Institut f\"ur Mathematik, l'Universit\'e de Lille 1 and the Isaac Newton Institute of Cambridge for giving me the opportunity to work in very dynamical and stimulating environments. I also would like to thank Vikram Mehta for suggesting me the problem in the case of curves, Dajano Tossici for very useful suggestions and discussions and Michel Emsalem for his constant (not only) mathematical help. 

\section{Preliminaries}
\subsection{The Picard scheme}
\label{sez:Picard}
In this section we briefly recall the definitions and properties of the Picard scheme and related objects that we will need in the next sections. The following material is mostly taken from Grothendieck's seminars \cite{GroBourV} and \cite{GroBourVI} and the more recent Kleiman's exposition \cite{KL} (also available on the arXiv). Throughout this section $X$ and $S$ will be schemes and $f:X\to S$ a separated morphism of finite type. For any base change $T\to S$ we denote for convenience $X_T:=X\times_S T$ and $f_T:X_T\to T$. As usual $Pic(X)\simeq H^1(X,\mathcal{O}_X^{\ast})$ will denote the absolute Picard group of $X$, i.e. the group of isomorphism classes of invertible sheaves of $\mathcal{O}_X$-modules. Now consider the two contravariant functors $Pic_{X}$ and $Pic_{X/S}$ from the category $\mathcal{S}ch/S$  to the category of abelian groups $\mathcal{A}b$ given by formulas 

$$  Pic_X(T):=Pic(X_T),$$

$$  Pic_{X/S}(T):=Pic(X_T)/Pic(T).$$
 They are called, respectively, the absolute Picard functor and the relative Picard functor. Then denote by $Pic_{(X/S)(zar)}$, $Pic_{(X/S)(\acute{e}t)}$, $Pic_{(X/S)(fppf)}$ the sheaves in the Zariski, étale and fppf topology associated to $Pic_{X/S}$ (or, equivalently, to $Pic_X$). When $S$ is the spectrum of a commutative ring $R$ with unity  we will usually write ${Pic}_{X/R}$ instead of ${Pic}_{X/Spec(R)}$. 
 
 \begin{defin} We will say that 
$\mathcal{O}_S\simeq f_{\ast}\mathcal{O}_X$ holds universally if for any $S$-scheme $T$ the natural morphism  $\mathcal{O}_T\to {f_T}_{\ast}\mathcal{O}_{X_T}$ is an isomorphism. 
\end{defin}

\begin{rem}\label{ossUniversal} When $f:X\to S$ is proper and flat with reduced and connected geometric fibers then $\mathcal{O}_S\simeq f_{\ast}\mathcal{O}_X$ holds universally (cf. \cite{KL}, Exercise 9.3.11).
\end{rem}

\begin{teo}\label{teoKleiman} Assume $\mathcal{O}_S\simeq f_{\ast}\mathcal{O}_X$ holds universally. Assume moreover that $X(S)\neq \emptyset$, then
the natural maps 
$$Pic_{X/S}\to Pic_{(X/S)(zar)}\to Pic_{(X/S)(\acute{e}t)}\to Pic_{(X/S)(fppf)}$$
 are isomorphisms.
\end{teo}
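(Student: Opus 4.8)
The plan is to follow the classical rigidification technique: the section lets us normalize line bundles along it, the universal triviality of global functions kills all automorphisms, and consequently the relative Picard functor turns out to be \emph{already} a sheaf for the fppf topology, which forces all three comparison maps to be isomorphisms. Fix a section $\epsilon\in X(S)$; for every $S$-scheme $T$ it induces a section $\epsilon_T:T\to X_T$ of $f_T$, so that $f_T\circ\epsilon_T=\mathrm{id}_T$. First I would introduce the \emph{rigidified} Picard functor $P$, sending $T$ to the set of isomorphism classes of pairs $(L,\alpha)$ with $L\in Pic(X_T)$ and $\alpha:\mathcal{O}_T\xrightarrow{\sim}\epsilon_T^{\ast}L$ a trivialization along the section, a morphism being an isomorphism of line bundles compatible with the $\alpha$'s. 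The decisive input is the hypothesis that $\mathcal{O}_S\simeq f_{\ast}\mathcal{O}_X$ holds universally: it yields $H^0(X_T,\mathcal{O}_{X_T})=H^0(T,\mathcal{O}_T)$, and hence $H^0(X_T,\mathcal{O}_{X_T}^{\ast})=H^0(T,\mathcal{O}_T^{\ast})$, for every $T$. From this I would deduce that a rigidified bundle $(L,\alpha)$ has no nontrivial automorphisms: such an automorphism is multiplication by a unit $u$ on $X_T$ which, by the above, comes from $T$ and must satisfy $\epsilon_T^{\ast}u=1$, forcing $u=1$.

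Next I would show that $P$ is a sheaf for the fppf topology. Line bundles together with their isomorphisms satisfy fppf descent (they form a stack), and the vanishing of automorphisms just established turns this stack into a sheaf of sets: descent data for rigidified bundles carry no gluing ambiguity, so they are always effective and glue uniquely. Thus $P$ is an fppf sheaf on $Sch/S$.

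The core computation is then to identify $P$ with $Pic_{X/S}$ as presheaves via the forgetful map $\phi_T\colon (L,\alpha)\mapsto [L]\in Pic(X_T)/Pic(T)$. For surjectivity, given $L$ I would twist by the pullback of $N:=\epsilon_T^{\ast}L\in Pic(T)$: the bundle $L\otimes f_T^{\ast}N^{-1}$ has the same class in $Pic_{X/S}(T)$ and restricts to $\mathcal{O}_T$ along the section, hence admits a rigidification. For injectivity, if $(L,\alpha)$ and $(L',\alpha')$ have the same image then $L\simeq L'\otimes f_T^{\ast}M$ for some $M\in Pic(T)$; restricting along $\epsilon_T$ gives $\mathcal{O}_T\simeq M$, so $L\simeq L'$, and an arbitrary such isomorphism can be rescaled by a unit pulled back from $T$ (again using $H^0(X_T,\mathcal{O}_{X_T}^{\ast})=H^0(T,\mathcal{O}_T^{\ast})$) so as to match the two rigidifications. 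Hence $\phi$ is a natural bijection and $Pic_{X/S}\simeq P$.

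Finally, since $P$ is an fppf sheaf, so is $Pic_{X/S}$; being an fppf sheaf it is a fortiori a sheaf for the coarser \'etale and Zariski topologies, so each of its sheafifications coincides with $Pic_{X/S}$ itself and all the comparison maps in the statement are isomorphisms. The main obstacle is precisely the rigidification step: everything hinges on exploiting the universal hypothesis to descend automorphisms and units from $X_T$ to the base $T$, both to make $P$ a sheaf and to match rigidifications in the injectivity argument. The descent and twisting arguments are then formal.
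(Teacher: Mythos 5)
Your argument is correct and complete. Note, however, that the paper does not actually prove this statement: it simply defers to Kleiman, \emph{The Picard Scheme}, Theorem 9.2.5, so there is no internal proof to compare against. Your route --- introducing the rigidified Picard functor $P$, using the universal hypothesis $\mathcal{O}_S\simeq f_{\ast}\mathcal{O}_X$ to identify units on $X_T$ with units on $T$ and thereby kill automorphisms of rigidified bundles, concluding that $P$ is an fppf sheaf by descent for line bundles, and then identifying $P$ with $Pic_{X/S}$ via the section --- is the standard rigidification proof, essentially the one in Bosch--L\"utkebohmert--Raynaud, \emph{N\'eron Models}, 8.1, Proposition 4. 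Kleiman's own proof of 9.2.5 instead runs the low-degree exact sequence of the Leray spectral sequence for $f_T$ with coefficients in $\mathbb{G}_m$, using the section to split the maps $H^i(T,-)\to H^i(X_T,-)$ and the universal hypothesis to identify $f_{T\ast}\mathbb{G}_m$ with $\mathbb{G}_m$; this yields $Pic(X_T)/Pic(T)\simeq H^0(T,R^1f_{T\ast}\mathbb{G}_m)$ directly. The two proofs exploit the two hypotheses in exactly the same way (the section to rigidify or split, the universal triviality of global functions to control units), so the choice between them is a matter of taste; your version has the advantage of being elementary and of exhibiting the relative Picard functor concretely as a functor of rigidified bundles, which is also the form most useful for the representability statements quoted later in the paper.
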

\proof 
Cf. \cite{KL}, Theorem 9.2.5. \endproof

\begin{defin} If any of the functors $Pic_{X/S}, Pic_{(X/S)(zar)}, Pic_{(X/S)(\acute{e}t)}$ or $Pic_{(X/S)(fppf)}$ is
representable, then the representing scheme is called the Picard scheme and denoted
by $\mathbf{Pic}_{X/S}$.
\end{defin}

Now we recall an existence theorem for the Picard scheme over a general base $S$; a sharper result when $S$ is the spectrum of a field has been proven by Murre and Oort (cf. \cite{Murre}, II.15, Theorem 2).

\begin{teo}\label{teoPicard} Assume that $f:X\to S$ is  flat and projective with integral geometric fibers, then $\mathbf{Pic}_{X/S}$ exists, is separated and locally of finite type over $S$ and represents $Pic_{(X/S)(\acute{e}t)}$ (and $Pic_{(X/S)(fppf)}$). Assume moreover that $X(S)\neq \emptyset$ then $\mathbf{Pic}_{X/S}$ represents all the functors $Pic_{X/S}$, $Pic_{(X/S)(zar)}$, $Pic_{(X/S)(\acute{e}t)}$ and $Pic_{(X/S)(fppf)}$. 
\end{teo}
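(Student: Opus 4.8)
The statement splits naturally into two parts of very different depth. The first assertion — that $\mathbf{Pic}_{X/S}$ exists, is separated and locally of finite type, and represents $Pic_{(X/S)(\acute{e}t)}$ — is Grothendieck's fundamental existence theorem, and the plan is to reproduce its construction following Kleiman's exposition \cite{KL}; the second assertion, that under the additional hypothesis $X(S)\neq\emptyset$ the same scheme represents all four functors, I would deduce formally from Theorem \ref{teoKleiman}. Let me dispose of the second part first, since it is immediate: because $f$ is projective, hence proper, and flat with integral — thus reduced and connected — geometric fibers, Remark \ref{ossUniversal} guarantees that $\mathcal{O}_S\simeq f_\ast\mathcal{O}_X$ holds universally. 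Granting representability of $Pic_{(X/S)(\acute{e}t)}$, the hypothesis $X(S)\neq\emptyset$ then places us exactly in the situation of Theorem \ref{teoKleiman}, which identifies $Pic_{X/S}$, $Pic_{(X/S)(zar)}$, $Pic_{(X/S)(\acute{e}t)}$ and $Pic_{(X/S)(fppf)}$; hence the scheme representing the \'etale sheaf automatically represents the remaining three functors.

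For the core existence statement I would argue as follows. Fix a relatively very ample sheaf $\mathcal{O}_X(1)$ coming from the projective structure and work Zariski-locally on $S$, so that $S$ is affine. The idea is to relate the Picard functor to the relative Hilbert scheme $\mathbf{Hilb}_{X/S}$, whose existence for projective $f$ is Grothendieck's companion existence theorem. Inside $\mathbf{Hilb}_{X/S}$ one isolates the open subscheme $\mathbf{Div}_{X/S}$ parametrizing families of effective relative Cartier divisors; here the hypothesis of integral geometric fibers is essential, as it ensures that on each fiber an effective divisor determines, and is determined up to linear equivalence by, the line bundle $\mathcal{O}(D)$, and that $f_\ast$ of a line bundle behaves well under base change. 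One then constructs the Abel morphism $\mathbf{Div}_{X/S}\to Pic_{(X/S)(\acute{e}t)}$, $D\mapsto\mathcal{O}_X(D)$, and shows, by twisting an arbitrary line bundle by a high power of $\mathcal{O}_X(1)$ to make it very ample with enough sections, that this map is surjective as a morphism of fppf (equivalently \'etale) sheaves. Since the fibers of the Abel map are the linear systems $|D|$, i.e. projective spaces, one realizes $Pic_{(X/S)(\acute{e}t)}$, on each stratum indexed by a Hilbert polynomial, as a quotient of a representable open subfunctor of $\mathbf{Div}_{X/S}$, and glues these representable pieces into a scheme locally of finite type over $S$.

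The remaining formal properties I would check directly. That the \'etale and fppf sheafifications coincide — so that $Pic_{(X/S)(\acute{e}t)}=Pic_{(X/S)(fppf)}$ holds even without a section — is the general comparison $H^1_{fppf}(-,\mathbb{G}_m)=H^1_{\acute{e}t}(-,\mathbb{G}_m)$ valid for the smooth group $\mathbb{G}_m$. Separatedness I would verify through the valuative criterion: over a discrete valuation ring a line bundle on the generic fiber extends uniquely to the total space precisely because the special fiber is integral, so that two extensions of the same generic line bundle must agree. Local finiteness of type is built into the construction, each gluing piece being quasi-projective over $S$.

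The main obstacle is unquestionably the core existence part, and within it the analysis of the Abel map: proving that $\mathbf{Div}_{X/S}$ is open in $\mathbf{Hilb}_{X/S}$, that the Abel map is representable with projective-space fibers, and that the \'etale-sheaf quotient it induces is itself a scheme requires the full weight of Grothendieck's Hilbert-scheme machinery together with cohomology-and-base-change control of $f_\ast\mathcal{O}_X(D)$. Since this is entirely classical, in the write-up I expect simply to invoke the relevant theorem of \cite{KL} (and of \cite{Murre} over a field), reserving independent argument only for the deduction of the \emph{moreover} clause from Theorem \ref{teoKleiman}.
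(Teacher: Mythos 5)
Your proposal is correct and follows essentially the same route as the paper: the existence, separatedness, local finiteness of type, and representability of the \'etale (hence fppf) sheaf are delegated to Kleiman's existence theorem (\cite{KL}, Theorem 9.4.8, whose proof is indeed the Hilbert-scheme/Abel-map construction you outline), and the \emph{moreover} clause is deduced exactly as you do, by combining Remark \ref{ossUniversal} (projective flat with integral, hence reduced and connected, geometric fibers) with Theorem \ref{teoKleiman}. No gaps.
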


\proof By \cite{KL}, Theorem 9.4.8 the Picard scheme $\mathbf{Pic}_{X/S}$ exists, is separated and locally of finite type over $S$ and represents $Pic_{(X/S)(\acute{e}t)}$ (and consequently $Pic_{(X/S)(fppf)}$). Then use remark \ref{ossUniversal} and theorem \ref{teoKleiman} for the second assertion. \endproof

\begin{rem} When $\mathbf{Pic}_{X/S}$ exists then for any base change $S'\to S$ the Picard scheme  $\mathbf{Pic}_{X_{S'}/S'}$ also exists and $\mathbf{Pic}_{X_{S'}/S'}\simeq \mathbf{Pic}_{X/S}\times_S S'$ (cf. \cite{KL}, Ex. 9.4.4).\end{rem} 

We recall that $X$ is an abelian scheme over $S$ of relative dimension $g$ if $X$ is a smooth and proper $S$-group scheme with  connected geometric fibers of dimension $g$. An abelian scheme is always a commutative group scheme (cf. \cite{mumGIT}, Ch. 6, \S 1, Corollary 6.5).

\begin{rem}\label{remPicard} When $X\to S$ is an abelian projective scheme then $\mathcal{O}_S\simeq f_{\ast}\mathcal{O}_X$ holds universally and of course $X(S)\neq \emptyset$ so theorem \ref{teoPicard} holds.\end{rem}

Let $X$ be a scheme over a field $k$ such that $\mathbf{Pic}_{X/k}$ exists, we denote by $\mathbf{Pic}_{X/k}^0$ the connected component of the Picard scheme $\mathbf{Pic}_{X/k}$ containing the unity. It is a geometrically irreducible $k$-group scheme of finite type (cf. \cite{KL}, Lemma 9.5.1). For a general base scheme  $S$ we denote by $Pic^0_{X/S}$ the union, as a set, of all the $\mathbf{Pic}_{X_s/k(s)}^0$; assume the existence of $\mathbf{Pic}_{X/S}$ and suppose there exists an open $S$-group subscheme of $\mathbf{Pic}_{X/S}$, that we denote by $\mathbf{Pic}_{X/S}^{0}$, satisfying the property that for any point $s\in S$ 

$$ \mathbf{Pic}_{X/S}^{0}\times_S k(s) \simeq \mathbf{Pic}_{X_s/k(s)}^0.$$ As a set it is just $Pic^0_{X/S}$; by definition it has irreducible geometric fibers so if moreover it is smooth and proper over $S$ it is an abelian scheme. We discuss the existence of $\mathbf{Pic}_{X/S}^{0}$ in the following theorem:

\begin{teo}\label{teoesistenza1}
Assume  that $\mathbf{Pic}_{X/S}$ exists, is separated over $S$  and represents $Pic_{(X/S)(fppf)}$. For
any $s\in S$ assume all the $\mathbf{Pic}_{X_s/{k(s)}}^0$
 are smooth of the same dimension. Then $\mathbf{Pic}_{X/S}^{0}$ exists and is of finite type over $S$.
Furthermore, if $S$ is reduced, then $\mathbf{Pic}_{X/S}^{0}$ is smooth over $S$. Moreover, if $S$ is noetherian then $\mathbf{Pic}_{X/S}^{0}$ is closed in
$\mathbf{Pic}_{X/S}$ and projective over $S$.
\end{teo}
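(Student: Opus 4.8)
The plan is to forget the Picard-theoretic origin of $G:=\mathbf{Pic}_{X/S}$ and to argue with $G$ as an abstract separated $S$-group scheme, locally of finite type, whose fibral identity components $G_s^0=\mathbf{Pic}_{X_s/k(s)}^0$ are smooth of one and the same dimension $d$. The relative identity component is then produced by the general theory of group schemes over a base (as in SGA 3, Exp.~VI$_B$, and Kleiman's exposition \cite{KL}), and the four assertions are obtained in turn.

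The heart of the argument is the openness of the subset $\bigcup_{s\in S}G_s^0\subseteq |G|$, and here is how I would get it without assuming $G\to S$ flat. First observe that smoothness is translation invariant on a fibre: over $\overline{k(s)}$ every connected component of $G_s$ is a coset of $(G_s^0)_{\overline{k(s)}}$, so the smoothness of $G_s^0$ forces the whole fibre $G_s$ to be smooth of pure dimension $d$. Next I would pass to $S_{red}$: since $|S_{red}|=|S|$ and the fibre of $G$ over $s$ depends only on $k(s)$, the scheme $G\times_S S_{red}$ has the same underlying space and the same fibres as $G$. Over the reduced base $S_{red}$ the fibres are smooth of constant dimension $d$, so by the standard flatness criterion over a reduced base the morphism $G\times_S S_{red}\to S_{red}$ is flat, hence smooth. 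For a smooth group scheme the union of the fibral identity components is an open subgroup scheme (SGA 3, Exp.~VI$_B$), so $\bigcup_s G_s^0$ is open in $|G\times_S S_{red}|=|G|$. Transporting this open subset back to $G$ and endowing it with the induced open subscheme structure yields $\mathbf{Pic}_{X/S}^0$; the group laws restrict to it fibre by fibre, so it is a subgroup scheme, and when $S$ itself is reduced the computation above already exhibits $\mathbf{Pic}_{X/S}^0=\mathbf{Pic}_{X/S}^0\times_S S_{red}$ as smooth over $S$.

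For finite type I would use that a connected group scheme locally of finite type over a field is quasi-compact, so each $G_s^0$ is of finite type over $k(s)$; a quasi-compact open neighbourhood of the identity section then covers $\mathbf{Pic}_{X/S}^0$ after finitely many fibral translations, giving quasi-compactness over $S$ and hence finite type. Finally, assume $S$ noetherian. Each fibre $\mathbf{Pic}_{X_s/k(s)}^0$ is now a smooth, connected, proper group scheme, i.e.\ an abelian variety, and the projectivity of the family $\mathbf{Pic}_{X/S}^0\to S$ is the classical theorem of Grothendieck on the identity component of the Picard scheme of a projective family (cf.\ \cite{KL}). Granting properness, closedness of $\mathbf{Pic}_{X/S}^0$ in $\mathbf{Pic}_{X/S}$ comes for free: an open immersion into the separated $S$-scheme $\mathbf{Pic}_{X/S}$ that is simultaneously proper over $S$ is a closed immersion.

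I expect the genuine obstacle to be the openness in the second step, precisely because no flatness of the total space $G\to S$ is assumed in the hypotheses: the reduction to $S_{red}$ and the passage through the reduced-base flatness criterion are exactly what force the constant-dimension and fibral-smoothness assumptions to do their work, and keeping track of the scheme structure (not merely the underlying set) of $\mathbf{Pic}_{X/S}^0$ is the delicate bookkeeping. The projectivity in the last assertion is also deep, but it can be quoted wholesale from Grothendieck's theory rather than reproved here.
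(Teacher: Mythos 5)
The paper's own proof of this theorem is a direct citation of \cite{KL}, Proposition 9.5.20 and Exercise 9.5.7, so you are attempting to supply an argument that the paper itself does not spell out; unfortunately the step your whole construction rests on is not valid. The ``standard flatness criterion over a reduced base'' you invoke --- locally of finite type, reduced target, all fibres smooth of one and the same dimension, hence flat --- is false, even for separated group schemes of finite type. Take $S=\operatorname{Spec}\bigl(k[u,v]/(uv)\bigr)$, which is reduced, and inside $(\mathbb{Z}/2)_S=S_0\sqcup S_1$ take the subgroup scheme $G=S_0\sqcup Z$, where $Z\subset S_1$ is the branch $\{v=0\}$: every fibre of $G\to S$ is smooth of dimension $0$ (the constant group $\mathbb{Z}/2$ over the branch $\{v=0\}$, the trivial group elsewhere), yet $G\to S$ is not flat, because $A/vA$ is not a flat $A$-module for $A=k[u,v]/(uv)$ (one computes $\operatorname{Tor}_1^A(A/vA,\,A/(u,v))\simeq k\neq 0$ from the resolution $A\xrightarrow{\,u\,}A\xrightarrow{\,v\,}A\to A/vA\to 0$). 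Note that in this example the union of the fibral identity components is still open --- so the theorem is not contradicted, only your route to it. The correct argument, the one behind \cite{KL}, Proposition 9.5.20, never asserts flatness or smoothness of all of $\mathbf{Pic}_{X/S}\times_S S_{red}$; it establishes the openness of $\bigcup_s\mathbf{Pic}^0_{X_s/k(s)}$ directly from EGA~IV$_3$, \S 15.6 (Chevalley semicontinuity of fibre dimension combined with openness of the smooth locus in the fibres and the local constancy of $\dim\mathbf{Pic}^0_{X_s/k(s)}$). Indeed $\mathbf{Pic}_{X/S}$ is in general \emph{not} flat over $S$; you correctly identified this as the crux, but resolved it with a criterion that does not exist, and your smoothness claim for reduced $S$ collapses together with it since it rests on the same assertion.

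The surrounding bookkeeping is mostly sound: the translation argument showing each fibre $\mathbf{Pic}_{X_s/k(s)}$ is smooth of pure dimension $d$, the quasi-compactness of the identity component via $V\cdot V$ over an affine base, and the deduction that an open subscheme of a separated $S$-scheme which is proper over $S$ is closed are all fine. But two further points deserve attention. First, in the last assertion you use that each $\mathbf{Pic}^0_{X_s/k(s)}$ is a \emph{proper} (hence abelian) variety; properness of the fibres is not among the stated hypotheses, and the paper obtains it from \cite{KL}, Exercise 9.5.7 using the standing assumptions on $f:X\to S$ fixed at the start of the section. Second, the projectivity of $\mathbf{Pic}^0_{X/S}$ over $S$ is in your write-up quoted wholesale, exactly as in the paper, so nothing is gained there. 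As it stands, the proposal proves neither the openness nor the smoothness assertion, which are precisely the parts the theorem is quoted for later in the paper.
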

\begin{proof} By \cite{KL}, Proposition 9.5.20 $\mathbf{Pic}_{X/S}^{0}$ exists and is of finite type over $S$ and it is smooth over $S$ when $S$ is reduced. By \cite{KL}, Ex. 9.5.7 all the $\mathbf{Pic}_{X_s/{k(s)}}^0$ are projective over $k(s)$ then $\mathbf{Pic}_{X/S}^{0}$ is closed in $\mathbf{Pic}_{X/S}$ by \cite{KL}, Proposition 9.5.20. Again by \cite{KL}, Ex. 9.5.7 $\mathbf{Pic}_{X/S}^{0}$ is projective over $S$.
\end{proof}

\begin{rem}\label{ossPicCarZero} If $char(k(s))=0$ then $\mathbf{Pic}_{X_s/{k(s)}}^0$
 is smooth over $k(s)$ by \cite{mumLec}, Lecture 25, Theorem 1, and if $char(k(s))=0$ for all $s\in S$ then all the $\mathbf{Pic}_{X_s/{k(s)}}^0$ are smooth of the same dimension (cf. \cite{KL}, Remark 9.5.21) so in this case we don't need to add this assumption in the statement of theorem \ref{teoesistenza1}.
\end{rem}

 When $\mathbf{Pic}_{X/S}$ exists let $n:\mathbf{Pic}_{X/S}\to \mathbf{Pic}_{X/S}$ be the multiplication by $n$ for every integer $n>0$;  we define the set $$\mathbf{Pic}_{X/S}^{\tau}:=\cup_n n^{-1}({Pic}_{X/S}^0).$$

\noindent If it is open in $\mathbf{Pic}_{X/S}$ it inherits a structure of scheme. We have the following

\begin{teo}\label{teoesistenza2} Assume $S$ is noetherian, $f:X\to S$ is projective and flat with irreducible geometric fibers. Then $\mathbf{Pic}_{X/S}^{\tau}$ is an open and closed group subscheme of $\mathbf{Pic}_{X/S}$, quasi projective and of finite type over $S$. If moreover $f$ is smooth then $\mathbf{Pic}_{X/S}^{\tau}$ is projective over $S$.
\end{teo}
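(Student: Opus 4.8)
The plan is to build everything on two deep inputs---the theorem of the base (finite generation of the N\'eron--Severi group of each fibre) and Kleiman's boundedness theorem---and to identify $\mathbf{Pic}_{X/S}^{\tau}$ with the locus of fibrewise numerically trivial invertible sheaves. By Theorem \ref{teoPicard} the scheme $\mathbf{Pic}_{X/S}$ exists, is separated and locally of finite type over $S$, so all the assertions are \'etale-local on $S$ and I may replace $S$ by an \'etale cover over which a Poincar\'e sheaf $\mathcal{P}$ on $X\times_S\mathbf{Pic}_{X/S}$ exists. Fixing a relatively ample $\mathcal{O}_X(1)$, I would consider the function sending $\xi\in\mathbf{Pic}_{X/S}$ to the Hilbert polynomial $n\mapsto\chi(X_s,\mathcal{P}_\xi\otimes\mathcal{O}_{X_s}(n))$ of the corresponding sheaf on its fibre $X_s$. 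Since $\mathbf{Pic}_{X/S}$ is locally of finite type over the noetherian base and Euler characteristics are locally constant in flat families, this polynomial is locally constant; hence the locus where it agrees with the Hilbert polynomial of $\mathcal{O}_{X_s}$ is open and closed. By the numerical criterion for the $\tau$-component (on a geometrically integral projective fibre an invertible sheaf lies in $n^{-1}(\mathbf{Pic}^0_{X_s})$ for some $n>0$ precisely when it is numerically trivial, equivalently when its Hilbert polynomial coincides with that of the structure sheaf) this locus is exactly $\mathbf{Pic}_{X/S}^{\tau}$, which therefore descends to an open and closed subscheme of $\mathbf{Pic}_{X/S}$. That it is a subgroup scheme is checked fibrewise and is immediate: if $n\xi$ and $m\eta$ lie in $\mathbf{Pic}^0_{X_s}$, then $nm(\xi\pm\eta)=m(n\xi)\pm n(m\eta)$ does too.

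For finiteness of type and quasi-projectivity the essential point is boundedness. On each fibre the theorem of the base gives that $NS(X_s)=\mathbf{Pic}_{X_s}/\mathbf{Pic}^0_{X_s}$ is finitely generated, so its torsion subgroup is finite and $\mathbf{Pic}^{\tau}_{X_s}$ is a finite union of translates of $\mathbf{Pic}^0_{X_s}$. I would then invoke Kleiman's boundedness theorem to see that the family of numerically trivial invertible sheaves parametrised by $\mathbf{Pic}_{X/S}^{\tau}$, as $s$ ranges over $S$, is bounded. Over the noetherian base this boundedness forces $\mathbf{Pic}_{X/S}^{\tau}$ to be quasi-compact over $S$, hence of finite type, and the same bounded family embeds into a suitable relative Quot scheme, yielding quasi-projectivity of $\mathbf{Pic}_{X/S}^{\tau}$ over $S$.

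Finally, if $f$ is smooth I would upgrade quasi-projectivity to projectivity by proving properness. For $X_s$ smooth projective and geometrically integral the identity component $\mathbf{Pic}^0_{X_s}$ is proper (indeed projective, by the same fact cited in the proof of Theorem \ref{teoesistenza1}), hence each fibre $\mathbf{Pic}^{\tau}_{X_s}$, being a finite union of its translates, is proper. Since $\mathbf{Pic}_{X/S}^{\tau}$ is separated and of finite type over the noetherian $S$ with proper fibres, and since a limit of numerically trivial sheaves over a trait remains numerically trivial, the valuative criterion can be verified and $\mathbf{Pic}_{X/S}^{\tau}\to S$ is proper; a proper morphism that is also quasi-projective is projective. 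The step I expect to be the main obstacle is exactly the boundedness in the second paragraph: controlling, uniformly in $s$, both the order of the torsion of $NS(X_s)$ and the family of invertible sheaves representing it. This is precisely where Kleiman's boundedness theorem, together with the constructibility made available by the noetherian hypothesis on $S$, carries the weight of the argument.
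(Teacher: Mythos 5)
The paper's own proof is a direct citation: existence, separatedness and local finiteness of type of $\mathbf{Pic}_{X/S}$ come from Theorem \ref{teoPicard}, and then everything about $\mathbf{Pic}_{X/S}^{\tau}$ is delegated to \cite{KL}, Theorem 9.6.16 and Ex.\ 9.6.18. You instead try to reprove that material, and the central step of your argument is incorrect. You claim that on a geometrically integral projective fibre an invertible sheaf lies in $\mathbf{Pic}^{\tau}_{X_s}$ precisely when its Hilbert polynomial with respect to a fixed ample $\mathcal{O}(1)$ coincides with that of $\mathcal{O}_{X_s}$. This numerical criterion is false. Take $X=\mathbb{P}^1\times C$ over an algebraically closed field, with $C$ a smooth projective curve of genus $g\geq 1$, let $A$ and $B$ be the classes of $\{pt\}\times C$ and $\mathbb{P}^1\times\{pt\}$, and take $H=A+B$ (ample) and $D=g(B-A)$. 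Riemann--Roch gives $\chi(D+mH)-\chi(mH)=\tfrac12 D\cdot(D-K_X)+m\,D\cdot H$, and one checks $D\cdot H=0$ and $D^2=D\cdot K_X=-2g^2$, so $\mathcal{O}(D)$ has the same Hilbert polynomial as $\mathcal{O}_X$; yet $D\cdot A=g\neq 0$, so $D$ is not numerically trivial and does not lie in $\mathbf{Pic}^{\tau}$. Consequently the open-and-closed locus you single out (a union of connected components of $\mathbf{Pic}_{X/S}$, by local constancy of Euler characteristics) strictly contains $\mathbf{Pic}^{\tau}_{X/S}$ in general, and your identification fails. The correct characterization (\cite{KL}, Theorem 9.6.3) requires the Hilbert polynomials of \emph{all} tensor powers $\lambda^{\otimes n}$ to agree with that of $\mathcal{O}$ (in the example above this fails already for $n=2$), and controlling that condition uniformly in $n$ is exactly the nontrivial content of \cite{KL}, Theorem 9.6.16 --- it does not reduce to the local constancy of a single Euler characteristic.

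Two further points are glossed over. For properness in the smooth case, separatedness, finite type and properness of the fibres do not imply properness; when you invoke the valuative criterion, the real issue is not that ``a limit of numerically trivial sheaves is numerically trivial'' but that the given invertible sheaf on the generic fibre over a trait extends to the whole family at all --- this is where smoothness of $f$ enters (extension of divisors by closure), and it is the substance of \cite{KL}, Ex.\ 9.6.18. And the boundedness of the family of numerically trivial sheaves, which you correctly identify as the crux for finiteness of type and quasi-projectivity, is named but not argued; that is defensible in a sketch that cites Kleiman's boundedness theorem, but as it stands your write-up replaces the one genuinely hard citation in the paper's proof with an argument whose key reduction is wrong.
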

\begin{proof}
By theorem \ref{teoPicard}  $\mathbf{Pic}_{X/S}$ exists, is separated and locally of finite type over $S$. Then use \cite{KL}, Theorem 9.6.16 and Ex. 9.6.18.
\end{proof}
In particular $\mathbf{Pic}_{X/S}^{\tau}$ is flat (resp. smooth) if $\mathbf{Pic}_{X/S}$ is flat (resp. smooth).

\begin{prop}\label{propDualAbel} Let $f:X\to S$ be a projective abelian scheme. Then $X^{\ast}:=\mathbf{Pic}_{X/S}^{0}$ exists, is a projective abelian scheme over $S$ and $\mathbf{Pic}_{X/S}^{0}=\mathbf{Pic}_{X/S}^{\tau}$. It is called the dual abelian scheme of $X$.
\end{prop}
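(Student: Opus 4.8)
The plan is to assemble the existence theorems recalled above and then to reduce both assertions to the classical theory of abelian varieties over a field, applied fibrewise.

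First I would set up $\mathbf{Pic}_{X/S}$. Since $f:X\to S$ is a projective abelian scheme, Remark \ref{remPicard} guarantees that $\mathcal{O}_S\simeq f_{\ast}\mathcal{O}_X$ holds universally and that $X(S)\neq\emptyset$ (the identity section $0:S\to X$); hence Theorem \ref{teoPicard} applies and $\mathbf{Pic}_{X/S}$ exists, is separated and locally of finite type over $S$, and represents $Pic_{(X/S)(fppf)}$. I would then check the hypotheses of Theorem \ref{teoesistenza1}: for every $s\in S$ the fibre $X_s$ is an abelian variety of dimension $g$ (the relative dimension of $X/S$) over $k(s)$, and $\mathbf{Pic}_{X_s/k(s)}^{0}$ is exactly its dual abelian variety, hence smooth over $k(s)$ and of dimension $g$, a value independent of $s$. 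Theorem \ref{teoesistenza1} then gives that $\mathbf{Pic}_{X/S}^{0}$ exists and is of finite type over $S$; since $S$ is reduced it is smooth over $S$, and since $S$ is noetherian it is closed in $\mathbf{Pic}_{X/S}$ and projective over $S$.

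At this stage $\mathbf{Pic}_{X/S}^{0}$ is a smooth, projective (in particular proper) $S$-group scheme whose geometric fibres are irreducible by the very definition of $\mathbf{Pic}^{0}$, so the observation recorded just before Theorem \ref{teoesistenza1} lets me conclude that $X^{\ast}:=\mathbf{Pic}_{X/S}^{0}$ is a projective abelian scheme over $S$, which settles the first part. For the equality $\mathbf{Pic}_{X/S}^{0}=\mathbf{Pic}_{X/S}^{\tau}$, the inclusion $\mathbf{Pic}_{X/S}^{0}\subseteq\mathbf{Pic}_{X/S}^{\tau}$ is immediate from the definition $\mathbf{Pic}_{X/S}^{\tau}=\cup_n n^{-1}({Pic}_{X/S}^0)$, since multiplication by $n$ carries $\mathbf{Pic}^{0}$ into itself. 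For the reverse inclusion I would argue fibrewise: both are open subschemes of $\mathbf{Pic}_{X/S}$ (the first by definition, the second by Theorem \ref{teoesistenza2}), so it suffices to prove that their geometric fibres agree; over a geometric point the fibre is an abelian variety, and the identity $\mathbf{Pic}^{\tau}=\mathbf{Pic}^{0}$ there amounts to the classical fact that the N\'eron--Severi group of an abelian variety is torsion-free. Granting this, $\mathbf{Pic}_{X/S}^{0}\hookrightarrow\mathbf{Pic}_{X/S}^{\tau}$ is an open immersion that is surjective on underlying sets, hence an isomorphism.

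I expect the only genuinely non-formal input to be the fibrewise statement that the N\'eron--Severi group of an abelian variety is torsion-free (equivalently that $\mathbf{Pic}^{\tau}$ of an abelian variety reduces to its connected component); everything else is a matter of quoting the existence results of this section and verifying that their hypotheses hold. A secondary point deserving care is the passage from the fibrewise equality of $\mathbf{Pic}^{0}$ and $\mathbf{Pic}^{\tau}$ to their equality as $S$-schemes, which rests on the elementary observation that a surjective open immersion is an isomorphism.
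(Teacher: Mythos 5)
Your argument is correct in substance, but it is genuinely different from what the paper does: the paper gives no argument at all for Proposition \ref{propDualAbel} and simply cites \cite{mumGIT}, Corollary 6.8 and \cite{Oort}, I.5, Property (5.3), whereas you reassemble the statement from the general existence machinery recalled in Section \ref{sez:Picard} (Theorems \ref{teoPicard}, \ref{teoesistenza1}, \ref{teoesistenza2} and Remark \ref{remPicard}) plus two classical facts about abelian varieties over a field: that $\mathbf{Pic}^{0}$ of an abelian variety is smooth of dimension $g$ (i.e.\ is the dual abelian variety), and that the N\'eron--Severi group of an abelian variety is torsion-free. Your reduction of $\mathbf{Pic}^{0}=\mathbf{Pic}^{\tau}$ to the fibres via ``a surjective open immersion is an isomorphism'' is sound, since both subschemes are open in $\mathbf{Pic}_{X/S}$ and their formation is fibrewise. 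What your route buys is a proof visibly built from the ingredients already on the page; what it costs is generality and the relocation, not elimination, of the hard content. Concretely: Theorem \ref{teoesistenza1} yields smoothness of $\mathbf{Pic}_{X/S}^{0}$ only when $S$ is \emph{reduced} (and closedness/projectivity only when $S$ is noetherian), while the proposition as stated, and Mumford's Corollary 6.8, impose no such hypothesis on $S$; over a non-reduced base your assembly does not deliver the ``abelian scheme'' conclusion, and one needs Mumford's deformation-theoretic argument. In the paper's actual applications $S$ is a connected Dedekind scheme, so this is harmless there, but you should either add the hypothesis or fall back on the cited reference for that case. Finally, the two fibrewise inputs you isolate are precisely the content of the classical references, so they must still be quoted (e.g.\ Mumford, \emph{Abelian Varieties}, for torsion-freeness of $\mathrm{NS}$); you correctly flag this rather than claiming to prove it.
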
\proof Cf. \cite{mumGIT}, Corollary 6.8. and \cite{Oort}, I.5, Property (5.3).\endproof

\begin{defin}\label{defAlbanese} When  $\mathbf{Pic}_{X/S}^0$ exists and  is a projective abelian scheme we call it the canonical abelian subscheme of $\mathbf{Pic}_{X/S}$. Set $\mathbf{Alb}_{X/S}:={(\mathbf{Pic}_{X/S}^0)}^{\ast}=\mathbf{Pic}_{{\mathbf{Pic}_{X/S}^{0}}/S}^{0}$ which is a projective abelian scheme over $S$ according to proposition \ref{propDualAbel}. We call it the Albanese scheme of $X\to S$.
\end{defin}

\begin{prop}\label{propAlbanese} Assume $f:X\to S$ has connected fibres, $\mathcal{O}_S\simeq f_{\ast}\mathcal{O}_X$ holds universally and let $x\in X(S)$. Assume $\mathbf{Pic}_{X/S}^0$ exists and  is a projective abelian scheme. Then there exists a canonical morphism $$\lambda:X\to \mathbf{Alb}_{X/S}$$ sending the $S$-valued section $x$ to $0_{\mathbf{Alb}_{X/S}}$. Furthermore forming $\mathbf{Alb}_{X/S}$ commutes with changing the base.
\end{prop}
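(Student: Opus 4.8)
The plan is to realize $\lambda$ as the morphism classifying the ``transpose'' of the Poincar\'e bundle. Write $A:=\mathbf{Pic}_{X/S}^0$, so that by hypothesis $A$ is a projective abelian scheme and, by Definition \ref{defAlbanese}, $\mathbf{Alb}_{X/S}=A^{\ast}=\mathbf{Pic}_{A/S}^0$. Since $\mathcal{O}_S\simeq f_{\ast}\mathcal{O}_X$ holds universally and $x\in X(S)$, Theorem \ref{teoKleiman} ensures that $\mathbf{Pic}_{X/S}$ (which exists, as $A$ is an open subscheme of it) represents $Pic_{X/S}$; hence there is a universal line bundle, the Poincar\'e bundle $\mathcal{P}$ on $X\times_S\mathbf{Pic}_{X/S}$, rigidified along $\{x\}\times\mathbf{Pic}_{X/S}$. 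First I would restrict it to obtain a line bundle, still denoted $\mathcal{P}$, on $X\times_S A$, rigidified along $\{x\}\times A$.

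Next I would set up the second rigidification. By construction $\mathcal{P}|_{\{x\}\times A}\simeq\mathcal{O}_A$. On the other hand $\mathcal{P}|_{X\times\{0_A\}}$ is the bundle on $X$ classified by the identity section $0_A$, so its class in $Pic(X)/Pic(S)$ is trivial; thus $\mathcal{P}|_{X\times\{0_A\}}\simeq f^{\ast}\mathcal{N}$ for some $\mathcal{N}\in Pic(S)$, and restricting along $x$ and using $f\circ x=\mathrm{id}_S$ together with the $x$-rigidification shows $\mathcal{N}\simeq\mathcal{O}_S$; hence $\mathcal{P}|_{X\times\{0_A\}}\simeq\mathcal{O}_X$ canonically. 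Reading $\mathcal{P}$ as a family of line bundles on $A$ parametrized by $X$, rigidified along $\{0_A\}$ via this trivialization, and invoking that $A$ is a projective abelian scheme so that $\mathbf{Pic}_{A/S}$ represents $Pic_{A/S}$ (Remark \ref{remPicard} and Theorem \ref{teoPicard}), I obtain a canonical morphism $\lambda\colon X\to\mathbf{Pic}_{A/S}$.

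I would then check that $\lambda$ lands in $\mathbf{Alb}_{X/S}=\mathbf{Pic}_{A/S}^0$ and sends $x$ to $0$. The point $\lambda(x)$ classifies $\mathcal{P}|_{\{x\}\times A}\simeq\mathcal{O}_A$, hence $\lambda(x)=0_{\mathbf{Pic}_{A/S}}$, which already lies in the identity component. For the image, working over a geometric point $\bar{s}$ of $S$, the fiber $X_{\bar{s}}$ is connected (connected geometric fibers), so $\lambda_{\bar{s}}(X_{\bar{s}})$ is connected, and since it contains $0=\lambda(x)$ it is contained in $\mathbf{Pic}_{A_{\bar{s}}/k(\bar{s})}^0=\mathbf{Pic}_{A/S}^0\times_S k(\bar{s})$. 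Thus set-theoretically $\lambda(X)\subseteq\mathbf{Pic}_{A/S}^0$; since $\mathbf{Pic}_{A/S}^0$ is open in $\mathbf{Pic}_{A/S}$, the morphism $\lambda$ factors scheme-theoretically through this open subscheme, giving the desired $\lambda\colon X\to\mathbf{Alb}_{X/S}$ with $\lambda(x)=0_{\mathbf{Alb}_{X/S}}$.

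Finally, compatibility with base change would follow because every ingredient is stable under $S'\to S$: the formation of $\mathbf{Pic}_{X/S}$ and of $\mathcal{P}$ commutes with base change (the remark following Theorem \ref{teoPicard} and the universal property of $\mathcal{P}$), the formation of $\mathbf{Pic}^0$ is fiberwise, and the dual of a projective abelian scheme commutes with base change (Proposition \ref{propDualAbel} and its references); assembling these identifications shows $\lambda_{S'}=\lambda\times_S S'$. I expect the main obstacle to be the bookkeeping of the two rigidifications in the relative setting, in particular making the trivialization of $\mathcal{P}$ along $X\times\{0_A\}$ \emph{honest} (and not merely fiberwise) so that $\lambda$ is a genuine $S$-morphism, together with upgrading the set-theoretic containment of the image in the open subscheme $\mathbf{Pic}_{A/S}^0$ to an actual scheme-theoretic factorization.
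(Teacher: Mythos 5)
Your proof is correct and follows essentially the same route as the paper: both read the Poincar\'e sheaf restricted to $X\times_S\mathbf{Pic}^0_{X/S}$ transposed, as a family of line bundles on $\mathbf{Pic}^0_{X/S}$ parametrized by $X$ (using Theorem \ref{teoKleiman} so that the relevant Picard functors are honestly represented), and then use connectedness of the fibres to factor through the identity component $\mathbf{Pic}^0_{\mathbf{Pic}^0_{X/S}/S}=\mathbf{Alb}_{X/S}$. The only immaterial difference is the normalization: you rigidify $\mathcal{P}$ along $\{x\}\times\mathbf{Pic}_{X/S}$ so that $\lambda(x)=0$ comes for free, whereas the paper takes an arbitrary universal sheaf and corrects by the translation $\lambda=\sigma-\sigma\circ x\circ f$.
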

\proof The second assertion follows from \cite{GroBourVI}, Th\'eor\`eme 3.3 (iii). For the existence of $\lambda$ we first observe that by theorem \ref{teoKleiman} $\mathbf{Pic}_{X/S}$ represents $Pic_{X/S}$ thus Yoneda's lemma (cf. \cite{Vis} Proposition 2.3) assure the existence of a unique universal sheaf $\mathcal{P}\in Pic(X\times \mathbf{Pic}_{X/S})$ called the Poincaré sheaf (see \cite{KL}, Exercice 9.4.3 for  more details). Consider $\mathcal{L}:=(1_X\times i)^{\ast}(\mathcal{P})$ where $i:\mathbf{Pic}_{X/S}^0\hookrightarrow \mathbf{Pic}_{X/S}$ is the inclusion map, then consider $$\overline{\mathcal{L}}:=(\mathcal{L}\text{ mod }Pic(X))\in Pic(X\times \mathbf{Pic}_{X/S}^0)/Pic(X)={\mathbf{Pic}}_{\mathbf{Pic}_{X/S}^0}(X)$$ thus to $\overline{\mathcal{L}}$ we associate a morphism $\sigma:X\to {\mathbf{Pic}}_{\mathbf{Pic}_{X/S}^0}$. Now we eventually make a translation by considering $\lambda:=(\sigma-\sigma\circ x \circ f):X\to {\mathbf{Pic}}_{\mathbf{Pic}_{X/S}^0}$ observing that $\lambda\circ x:S\to {\mathbf{Pic}}_{\mathbf{Pic}_{X/S}^0}$ is the zero map thus $\lambda$ sends the $S$-valued section $x$ to the identity element of ${\mathbf{Pic}}_{\mathbf{Pic}_{X/S}^0}$. Since $X$ has connected fibers then $\lambda$ factors through ${\mathbf{Pic}}_{\mathbf{Pic}_{X/S}^0}^{0}=\mathbf{Alb}_{X/S}$ as required.\endproof

\begin{teo}\label{teoDualIsom} Let $f:X\to S$ be a projective abelian scheme. The canonical morphism $\lambda:X\to X^{\ast\ast}$ defined in proposition \ref{propAlbanese} is an isomorphism of group schemes.
\end{teo}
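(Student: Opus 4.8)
The plan is to deduce the theorem from the classical biduality statement for abelian varieties over a field, globalised by a fibrewise criterion. First I would record the structural features of $\lambda$. Since $X$ is a projective abelian scheme, its dual $X^{\ast}=\mathbf{Pic}_{X/S}^{0}$ is again a projective abelian scheme by Proposition \ref{propDualAbel}, and hence so is $X^{\ast\ast}=\mathbf{Alb}_{X/S}$; moreover all three share the same relative dimension $g$, because forming the dual preserves the relative dimension of an abelian scheme. Taking the identity section $0_X\in X(S)$ as the marked point $x$, Proposition \ref{propAlbanese} produces $\lambda:X\to X^{\ast\ast}$ with $\lambda\circ 0_X=0_{X^{\ast\ast}}$. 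By the rigidity lemma for abelian schemes (cf. \cite{mumGIT}, Ch. 6), every $S$-morphism of abelian schemes carrying the zero section to the zero section is a homomorphism, so $\lambda$ is automatically a homomorphism of $S$-group schemes. It therefore suffices to prove that $\lambda$ is an isomorphism of $S$-schemes; its inverse will then be a homomorphism as well.

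Next I would reduce to the fibres. Both constructions involved are stable under base change: $\mathbf{Pic}_{X/S}$ commutes with base change (by the remark following Theorem \ref{teoPicard}), and forming $\mathbf{Alb}_{X/S}$ commutes with changing the base (Proposition \ref{propAlbanese}). Consequently, for every point $s\in S$ the morphism $\lambda_s:X_s\to (X_s)^{\ast\ast}$ obtained by base change to $k(s)$ is canonically identified with the analogous biduality morphism of the abelian variety $X_s$ over the field $k(s)$, and the same holds after passing to a geometric point $\bar s$ above $s$. The classical biduality theorem for abelian varieties over a field asserts precisely that this morphism is an isomorphism (cf. \cite{mumGIT}, Ch. 6 and \cite{Oort}, I.5); hence $\lambda_{\bar s}$ is an isomorphism for every geometric point $\bar s$, and in particular $\lambda_s$ is an isomorphism for every $s\in S$.

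Finally I would globalise. The schemes $X$ and $X^{\ast\ast}$ are smooth and proper, hence flat and of finite presentation over $S$ (recall $S$ is locally noetherian), and $\lambda$ is a morphism of finite presentation which is an isomorphism on every fibre over $S$. A standard fibrewise criterion (EGA IV, 17.9.5) then forces $\lambda$ to be an isomorphism. The main obstacle is exactly this last passage from fibrewise to global: one must know that a morphism between two $S$-flat, finitely presented schemes which is an isomorphism on all $S$-fibres is an isomorphism. If one prefers to argue directly, one can observe that $\lambda$ is proper (being a morphism between schemes proper over $S$) and quasi-finite (its fibres over the points of $X^{\ast\ast}$ are single reduced points, since each $\lambda_s$ is an isomorphism), hence finite; one then checks that the induced map $\mathcal{O}_{X^{\ast\ast}}\to\lambda_{\ast}\mathcal{O}_X$ of coherent sheaves is an isomorphism using the flatness of the two abelian schemes over $S$ together with Nakayama's lemma applied fibre by fibre.
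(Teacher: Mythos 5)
Your argument is correct in outline, but it is worth noting that the paper does not actually prove this statement: it simply cites Oort, III.20, Theorem (20.2) (and Kleiman, Remark 9.5.24), where biduality for abelian schemes over a general base is established. So you are supplying a proof where the paper supplies a reference, and the proof you give is essentially the standard one: rigidity to see that $\lambda$ is a homomorphism, reduction to fibres by base-change compatibility, the classical biduality theorem over a field, and the fibrewise isomorphism criterion of EGA IV, 17.9.5 (for which $X$ flat and locally of finite presentation over $S$ suffices: one gets that $\lambda$ is an open immersion, and surjectivity is checked on fibres). The one step you should make explicit is the identification of $\lambda_s$ with the classical biduality morphism of $X_s$ over $k(s)$: Proposition \ref{propAlbanese} asserts that forming $\mathbf{Alb}_{X/S}$ commutes with base change as an \emph{object}, but the morphism $\lambda$ is built from the Poincar\'e sheaf $\mathcal{P}$ on $X\times_S\mathbf{Pic}_{X/S}$, and you need to know that $\mathcal{P}$ pulls back to the Poincar\'e sheaf of $X_s$ (which holds by its universal property, suitably normalized along the section $x$). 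With that remark inserted, your fibrewise route is a complete and somewhat more self-contained alternative to the paper's citation; what the citation buys is precisely not having to redo this spreading-out argument.
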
\proof Cf. \cite{Oort}, III.20, Theorem (20.2) or \cite{KL}, Remark 9.5.24.\endproof

\begin{rem}\label{ossInclusioniPicchi} From the morphism $\lambda:X\to \mathbf{Alb}_{X/S}$ defined in \ref{propAlbanese} one deduces, taking the pull back, a $S$-group scheme morphism  $\lambda^{\ast}:\mathbf{Pic}_{\mathbf{Alb}_{X/S}/S}\to \mathbf{Pic}_{X/S}$ thus, composing with the inclusion, a morphism $\gamma:\mathbf{Pic}^{0}_{\mathbf{Alb}_{X/S}/S}\to \mathbf{Pic}_{X/S}$ which  factors through $\mathbf{Pic}_{X/S}^{0}$. The resulting morphism $$\gamma:\mathbf{Pic}^{0}_{\mathbf{Alb}_{X/S}/S}\to \mathbf{Pic}^0_{X/S}$$ is the isomorphism of theorem \ref{teoDualIsom} (see also \cite{Szamuely} Facts 5.8.9 for an informal discussion and references). From now on we identify $\mathbf{Pic}^0_{X/S}$ and $\mathbf{Pic}^{0}_{\mathbf{Alb}_{X/S}/S}$  through the isomorphism $\gamma$.
\end{rem}

Now we recall a duality  theorem concerning abelian schemes:

\begin{teo}\label{teoDualSequence} Let $X$ and $Y$ be projective abelian schemes over $S$. Let $\varphi:X\to Y$ be an isogeny and let $$\xymatrix{0\ar[r] & N \ar[r]& X \ar[r]^{\varphi} & Y \ar[r] & 0}$$ the corresponding exact sequence where $N:=ker(\varphi)$ is a $S$-finite and  flat group scheme. Then there exists an exact sequence
$$\xymatrix{0\ar[r] & N^{\vee} \ar[r]& Y^{\ast} \ar[r]^{\varphi^{\ast}} & X^{\ast} \ar[r] & 0}$$ where $N^{\vee}$ is the Cartier dual of $N$.
\end{teo}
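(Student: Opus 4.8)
The plan is to deduce the whole statement from a single long exact sequence obtained by applying a Hom-functor to the defining sequence of $N$, working with abelian sheaves for the fppf topology on $\mathcal{S}ch/S$. First I would record that, since $\varphi$ is finite and faithfully flat, the sequence
\[
0 \to N \to X \xrightarrow{\varphi} Y \to 0
\]
is exact as a sequence of fppf sheaves. I would then apply the derived functor $R\mathcal{H}om(-,\mathbb{G}_{m,S})$ and extract the associated long exact sequence
\begin{multline*}
0 \to \mathcal{H}om(Y,\mathbb{G}_m) \to \mathcal{H}om(X,\mathbb{G}_m) \to \mathcal{H}om(N,\mathbb{G}_m) \to \\
\mathcal{E}xt^1(Y,\mathbb{G}_m) \to \mathcal{E}xt^1(X,\mathbb{G}_m) \to \mathcal{E}xt^1(N,\mathbb{G}_m).
\end{multline*}

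The heart of the argument is the identification of these terms. I would use four facts: (i) $\mathcal{H}om(X,\mathbb{G}_m)=\mathcal{H}om(Y,\mathbb{G}_m)=0$, since an abelian scheme is proper with geometrically connected and reduced fibres and so admits no nontrivial homomorphisms to $\mathbb{G}_m$; (ii) the relative Barsotti--Weil formula $\mathcal{E}xt^1(X,\mathbb{G}_m)\simeq \mathbf{Pic}^0_{X/S}=X^{\ast}$, and likewise for $Y$, under which the induced map $\mathcal{E}xt^1(Y,\mathbb{G}_m)\to\mathcal{E}xt^1(X,\mathbb{G}_m)$ becomes exactly $\varphi^{\ast}:Y^{\ast}\to X^{\ast}$; (iii) $\mathcal{H}om(N,\mathbb{G}_m)=N^{\vee}$, which is just the functorial description of the Cartier dual of the finite flat group scheme $N$; and (iv) the vanishing $\mathcal{E}xt^1(N,\mathbb{G}_m)=0$, valid for any commutative finite and flat group scheme. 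Substituting these into the long exact sequence collapses it to
\[
0 \to N^{\vee} \to Y^{\ast} \xrightarrow{\varphi^{\ast}} X^{\ast} \to 0,
\]
which is precisely the claim; note that the surjectivity of $\varphi^{\ast}$ is exactly the content of the vanishing in (iv).

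The main obstacle is establishing (ii) over the general base $S$ rather than over a field: one must check that $\mathcal{E}xt^1_{S,\mathrm{fppf}}(X,\mathbb{G}_m)$ is representable and coincides with the dual abelian scheme $X^{\ast}=\mathbf{Pic}^0_{X/S}$ produced in Proposition \ref{propDualAbel}, which amounts to controlling $R^1f_{\ast}\mathbb{G}_m$ fibrewise and its compatibility with base change. I would reduce this to the classical field case together with the base-change compatibility of $\mathbf{Pic}^0_{X/S}$ already recorded in the excerpt. The vanishing (iv) is milder: since $N$ is finite and flat it is fppf-locally the spectrum of a finite flat Hopf algebra, and the vanishing of $\mathcal{E}xt^1(N,\mathbb{G}_m)$ is the standard local duality statement for finite flat commutative group schemes.

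As a sanity check, and an alternative route to the kernel, one can identify $\ker\varphi^{\ast}$ directly by descent: a point of the kernel is a line bundle $M$ on $Y$ with $\varphi^{\ast}M\simeq\mathcal{O}_X$, and specifying such an $M$ together with a trivialization of its pullback amounts, by faithfully flat descent along $\varphi$, to giving an $N$-linearization of $\mathcal{O}_X$, that is, a character $N\to\mathbb{G}_m$; hence $\ker\varphi^{\ast}\simeq N^{\vee}$. This makes the kernel transparent but still leaves the surjectivity of $\varphi^{\ast}$ to be proven, for which the Ext-sequence argument via (iv) is cleanest; I would therefore take the homological route as the primary one.
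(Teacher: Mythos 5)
The paper offers no proof of its own here---it simply cites Oort, III.19, Theorem (19.1)---and your homological argument (the long exact sequence obtained by applying $\mathcal{H}om(-,\mathbb{G}_m)$ in the fppf topology to $0\to N\to X\to Y\to 0$, combined with the Barsotti--Weil formula $\mathcal{E}xt^1(X,\mathbb{G}_m)\simeq X^{\ast}$, Cartier duality $\mathcal{H}om(N,\mathbb{G}_m)\simeq N^{\vee}$, and the vanishing $\mathcal{E}xt^1(N,\mathbb{G}_m)=0$ for finite flat $N$) is precisely the standard proof given in that reference (see also SGA 7, Exp.\ VIII). Your plan is correct and matches the cited argument; the one point worth stressing is that the vanishing in your step (iv), which carries the surjectivity of $\varphi^{\ast}$, genuinely requires working with fppf rather than \'etale sheaves, as you have done throughout.
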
\proof Cf. \cite{Oort}, III.19, Theorem (19.1).\endproof

In next theorem we analyze the case of curves:

\begin{teo}\label{teoPicardMumf} Let $f:X\to S$ be a smooth and projective curve with integral geometric fibers. Then $J_{X/S}:=\mathbf{Pic}_{X/S}^{0}=\mathbf{Pic}_{X/S}^{\tau}$ is a projective abelian scheme and there is a canonical isomorphism $$\theta:J_{X/S}\to J_{X/S}^{\ast}.$$\end{teo}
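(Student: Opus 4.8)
The plan is to prove the statement in two stages: first that $J_{X/S}:=\mathbf{Pic}^0_{X/S}$ is a projective abelian scheme coinciding with $\mathbf{Pic}^\tau_{X/S}$, using the existence machinery recalled above, and then to produce the autoduality isomorphism $\theta$ by descending the classical theta-polarization from the geometric fibres to $S$.

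First I would verify the hypotheses of the existence theorems. Since $f$ is smooth and projective with geometrically integral fibres, Theorem \ref{teoPicard} gives that $\mathbf{Pic}_{X/S}$ exists, is separated and locally of finite type over $S$, and represents $Pic_{(X/S)(fppf)}$. For each $s\in S$ the fibre $\mathbf{Pic}^0_{X_s/k(s)}$ is the Jacobian of the smooth projective geometrically integral curve $X_s$, hence an abelian variety, and in particular smooth over $k(s)$; its dimension equals $h^1(X_s,\mathcal{O}_{X_s})$, which is locally constant on $S$ because $f$ is flat and projective, hence constant since $S$ is connected. Thus all the $\mathbf{Pic}^0_{X_s/k(s)}$ are smooth of the same dimension $g$, and as $S$ is a reduced noetherian (Dedekind) scheme, Theorem \ref{teoesistenza1} yields that $\mathbf{Pic}^0_{X/S}$ exists, is smooth, closed in $\mathbf{Pic}_{X/S}$ and projective over $S$. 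Having irreducible geometric fibres by construction together with a zero section, it is a projective abelian scheme. The equality $\mathbf{Pic}^0_{X/S}=\mathbf{Pic}^\tau_{X/S}$ then follows from Theorem \ref{teoesistenza2} combined with the fibrewise fact that the N\'eron--Severi group of a smooth projective curve is $\mathbb{Z}$ (generated by the degree) and hence torsion-free: fibre by fibre one gets $\mathbf{Pic}^\tau_{X_s/k(s)}=\mathbf{Pic}^0_{X_s/k(s)}$, and since both are the same open-and-closed subscheme this propagates to the global equality.

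For the isomorphism $\theta$ I would invoke the classical principal polarization of the Jacobian. Over a base $S'$ admitting a section $p\in X(S')$ one forms the Abel--Jacobi morphism and the symmetric powers $X^{(d)}$, defines the theta divisor $\Theta\subset J_{X/S'}$ as the image of $X^{(g-1)}$, and takes the associated polarization $\lambda_{\mathcal{O}(\Theta)}:J_{X/S'}\to J^\ast_{X/S'}=\mathbf{Pic}^0_{J_{X/S'}/S'}$ sending a point $a$ to the class of $t_a^\ast\mathcal{O}(\Theta)\otimes\mathcal{O}(\Theta)^{-1}$. The crucial point is that $\lambda_{\mathcal{O}(\Theta)}$ depends only on the class of $\Theta$ in the N\'eron--Severi group, and is therefore independent of the chosen base point $p$ (translating the base point translates $\Theta$, and $\lambda_{t_b^\ast\mathcal{O}(\Theta)}=\lambda_{\mathcal{O}(\Theta)}$). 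Since $f$ is smooth, sections exist \'etale-locally on $S$, so I would choose an \'etale (hence fpqc) cover of $S$ over which $\theta$ is defined; by base-point independence the local maps agree on overlaps, and by fpqc descent for morphisms they glue to a canonical homomorphism $\theta:J_{X/S}\to J^\ast_{X/S}$. Finally $\theta$ is an isomorphism because it is one on every geometric fibre --- the theta divisor induces a principal polarization on the Jacobian of a curve by the classical Riemann--Roch theory --- and a homomorphism of abelian schemes over $S$ inducing an isomorphism on each geometric fibre is itself an isomorphism.

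The main obstacle is precisely the last paragraph: turning the fibrewise theta-polarization into a genuinely \emph{canonical} morphism over $S$ in the absence of a global section. The delicate points are the base-point independence of $\lambda_{\mathcal{O}(\Theta)}$ (so that the \'etale-local constructions actually descend) and the compatibility of the formation of $\Theta$ and $\lambda_{\mathcal{O}(\Theta)}$ with base change (so that checking the isomorphism property fibrewise is legitimate, the duality $J\mapsto J^\ast$ already commuting with base change as in Proposition \ref{propAlbanese}). These are exactly where one must rely on the classical theory of Jacobians, as developed in \cite{KL} and \cite{Oort}, rather than on the formal Picard-scheme machinery used in the first stage.
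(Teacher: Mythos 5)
Your argument is correct and is in substance the same as the paper's: the paper's entire proof of this theorem is the citation ``Cf.\ \cite{mumGIT}, Ch.~6, Proposition~6.9'', and what you have written out --- existence and projectivity of $\mathbf{Pic}^{0}_{X/S}$ via the Picard-scheme machinery, the fibrewise identification $\mathbf{Pic}^{\tau}_{X/S}=\mathbf{Pic}^{0}_{X/S}$ from torsion-freeness of the N\'eron--Severi group of a curve, and the autoduality via the theta polarization constructed \'etale-locally and glued by base-point independence --- is precisely the content of that proposition. The only caveat is that the classical inputs you lean on (that $\Theta$ is a relative effective Cartier divisor whose formation commutes with base change, and that $\lambda_{\mathcal{O}(\Theta)}$ depends only on the algebraic equivalence class of $\mathcal{O}(\Theta)$) are themselves nontrivial, but they are standard and are exactly what the cited reference supplies.
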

\proof This follows from \cite{mumGIT}, Ch. 6, Proposition 6.9. \endproof

\begin{defin}\label{defJacobian} The scheme $J_{X/S}$ as in theorem \ref{teoPicardMumf} is called the Jacobian (scheme) of the curve $X$.
\end{defin}

\begin{rem} Under the same assumptions of theorem \ref{teoPicardMumf} it is clear that $J_{X/S}\simeq \mathbf{Alb}_{X/S}$. If moreover the genus $g=1$ then $J_{X/S}\simeq X$ canonically by theorem \ref{teoDualIsom}.
\end{rem}

\subsection{The torsion N\'eron-Severi scheme}
\label{sez:Neron}
Even if in some relevant cases $\mathbf{Pic}_{X/S}^{\tau}=\mathbf{Pic}_{X/S}^{0}$ (e.g. relative curves, cf. theorem \ref{teoPicardMumf} and abelian schemes, cf. proposition \ref{propDualAbel}), it is not always the case. In general, by definition, $\mathbf{Pic}_{X/S}^{\tau}$ contains $\mathbf{Pic}_{X/S}^{0}$. Throughout this section we keep the following 

\begin{nota}\label{notaNS} We assume the existence of $\mathbf{Pic}_{X/S}$ instead of specifying hypothesis on the morphism $f:X\to S$ implying it exists and we also assume the existence of both $\mathbf{Pic}_{X/S}^{0}$ and $\mathbf{Pic}_{X/S}^{\tau}$ as open and closed subgroup schemes of $\mathbf{Pic}_{X/S}$. Moreover, we assume $\mathbf{Pic}_{X/S}^{0}$ is a projective abelian scheme over $S$ and $\mathbf{Pic}_{X/S}^{\tau}$ is projective and flat over $S$. 
\end{nota}

We fix some further conventions: if $G$ is any $S$-group scheme and $H$ a closed subgroup scheme of $G$ we denote by  $G/H_{(fpqc)}$ the sheaf associated, with respect to the fpqc topology, to the functor $$T\mapsto G(T)/H(T)$$ from the category of schemes over $S$ to the category of sets.  Now we recall a particular case of \cite{Gab}, Th\'eor\`eme 7.1:

\begin{teo}\label{teoRappre1}
Let $G$ be a $S$-group scheme of finite type and let $H$ be a closed subgroup scheme of $G$. If $H$ is proper and flat over $S$ and if $G$ is quasi projective over $S$ then  $G/H_{(fpqc)}$ is representable.
\end{teo}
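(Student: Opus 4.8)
The statement is a representability result for a quotient sheaf, so the plan is first to produce $G/H_{(fpqc)}$ as an algebraic space by descent along the free translation action of $H$, and then to upgrade it to a scheme using the quasi-projectivity of $G$ over $S$.

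First I would encode the right translation action of $H$ on $G$ as an equivalence relation. Put $R:=G\times_S H$ and let $j\colon R\to G\times_S G$ be $(g,h)\mapsto(g,gh)$; freeness of the action makes $j$ a monomorphism, and the groupoid identities follow from the group law of $H$. The two structure maps $R\rightrightarrows G$, namely the first projection $\mathrm{pr}_1$ and the multiplication $m\colon(g,h)\mapsto gh$, are both flat, proper and of finite presentation: $\mathrm{pr}_1$ is the base change of $H\to S$ along $G\to S$, and $m=\mathrm{pr}_1\circ\phi$ for the $S$-automorphism $\phi\colon(g,h)\mapsto(gh,h)$ of $G\times_S H$, hence shares these properties. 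Thus $R$ is a flat, proper, finitely presented equivalence relation on $G$. Since $H$ is flat the fpqc and fppf quotient sheaves agree, so it is enough to represent $G/H_{(fppf)}$.

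Next I would invoke the general existence theorem for quotients by flat, proper, finitely presented equivalence relations (Artin's theorem on algebraic spaces): the fppf quotient sheaf $Y:=G/H$ is a quasi-separated algebraic space of finite type over $S$, and the projection $\pi\colon G\to Y$ is a faithfully flat and proper $H$-torsor for the fppf topology. At this point $Y$ is only an algebraic space, and the whole content of the theorem is that it is in fact a scheme.

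The last step, which I expect to be the real obstacle, is to descend a relatively ample invertible sheaf along $\pi$. Start from a relatively ample $\mathcal{O}_G(1)$ afforded by the quasi-projective structure of $G\to S$. The difficulty is that $\mathcal{O}_G(1)$ carries no $H$-linearization in general; the plan is to manufacture one by a determinant-of-cohomology (norm) construction along the proper flat torsor, in the spirit of Raynaud's study of ample sheaves on group schemes. Replacing $\mathcal{O}_G(1)$ by a suitable power and twisting it by the determinant of the cohomology of $m^{\ast}\mathcal{O}_G(1)$ relative to the proper flat projection $\mathrm{pr}_1\colon R\to G$ yields an $H$-linearized invertible sheaf $\mathcal{M}$ on $G$ that remains relatively ample; it is precisely the properness and flatness of $H$ that make this pushforward a perfect complex with an invertible determinant and make the ampleness survive. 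An $H$-linearization of $\mathcal{M}$ is the same as descent data for $\pi$, so $\mathcal{M}$ descends to an invertible sheaf $\mathcal{N}$ on $Y$ with $\pi^{\ast}\mathcal{N}\simeq\mathcal{M}$, and, $\pi$ being faithfully flat and quasi-compact, $\mathcal{N}$ is again relatively ample, so that $Y\to S$ is quasi-projective. Since a separated algebraic space of finite type over a scheme that admits a relatively ample invertible sheaf is necessarily a scheme, this shows that $G/H_{(fpqc)}$ is representable. The crux is the construction and the preservation of ampleness of the linearized bundle $\mathcal{M}$; everything else is formal descent.
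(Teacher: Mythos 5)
The paper does not actually prove this statement: it is quoted, without proof, as a special case of \cite{Gab}, Th\'eor\`eme 7.1 (SGA~3, Exp.~V), so the only meaningful comparison is with Gabriel's argument. Your first two steps are sound and standard: the translation groupoid $R=G\times_S H\rightrightarrows G$ is a proper, flat, finitely presented equivalence relation, and Artin's theorem produces the quotient as an algebraic space $Y$ with $\pi\colon G\to Y$ a proper fppf $H$-torsor. The entire difficulty sits in your third step, which you yourself flag as the crux, and that step as written does not work.

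Here is the gap. An $H$-linearization of an invertible sheaf $\mathcal{M}$ on $G$ is exactly descent data along $\pi$, so $\mathcal{M}\simeq\pi^{\ast}\mathcal{N}$; in particular $\mathcal{M}$ is trivial on every fibre of $\pi$, and those fibres are proper of relative dimension equal to $\dim H$. Hence no $H$-linearized invertible sheaf on $G$ can be relatively ample over $S$ once $\dim H>0$ --- and this is precisely the case the paper needs, where $H=\mathbf{Pic}^{0}_{X/S}$ is an abelian scheme. The object you propose to build (``an $H$-linearized invertible sheaf $\mathcal{M}$ that remains relatively ample'') therefore cannot exist, and the final inference is also misapplied: ampleness descends along finite or quasi-affine surjections, and along faithfully flat base change of $S$, but not along a faithfully flat quasi-compact cover of the total space with positive-dimensional fibres. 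What actually has to be shown is that the determinant-of-cohomology sheaf $\det R(\mathrm{pr}_1)_{\ast}(m^{\ast}\mathcal{O}_G(1))$, which is indeed $\pi$-trivialized, descends to an $S$-\emph{ample} sheaf on $Y$; that is the genuinely hard content of the theorem and is not a formal consequence of properness and flatness of $H$. Gabriel's proof sidesteps this: it uses the relatively ample sheaf on $G$ to produce, locally over the would-be quotient, finite flat quasi-sections of the equivalence relation, and reduces to the effectivity theorem for finite flat equivalence relations whose orbits lie in affine opens, established earlier in the same expos\'e. Your reduction to algebraic spaces is a legitimate modern shortcut for the first half, but the passage from algebraic space to scheme still requires one of these non-formal arguments (Gabriel's quasi-sections, or Raynaud's and Anantharaman's theorems on homogeneous spaces and group spaces over a Dedekind base), which your sketch asserts rather than supplies.
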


Now, $\mathbf{Pic}_{X/S}^{0}$ is an open and closed subgroup scheme of $\mathbf{Pic}_{X/S}^{\tau}$ then we  consider the quotient sheaf
$${NS_{X/S}^{\tau}}:={\mathbf{Pic}_{X/S}^{\tau}/\mathbf{Pic}_{X/S}^{0}}_{(fpqc)}.$$

\begin{prop}\label{propNeronEsiste} The sheaf ${NS_{X/S}^{\tau}}$ is represented by a separated and flat $S$-group scheme of finite type and forming it commutes with changing the base; furthermore the projection $p:\mathbf{Pic}_{X/S}^{\tau}\to \mathbf{NS}_{X/S}^{\tau}$ is faithfully flat.
\end{prop}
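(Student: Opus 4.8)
The plan is to derive representability directly from Theorem \ref{teoRappre1} and then to obtain the remaining properties by faithfully flat descent along the projection $p$. Set $G:=\mathbf{Pic}_{X/S}^{\tau}$ and $H:=\mathbf{Pic}_{X/S}^{0}$. By Notation \ref{notaNS} the scheme $G$ is projective, hence quasi-projective, and of finite type over $S$, while $H$ is a projective abelian scheme over $S$, so it is proper and flat (indeed smooth) over $S$ and it is an open and closed, in particular closed, subgroup scheme of $G$. The hypotheses of Theorem \ref{teoRappre1} are therefore met, and the fpqc quotient sheaf $NS_{X/S}^{\tau}=(G/H)_{(fpqc)}$ is represented by an $S$-scheme $\mathbf{NS}_{X/S}^{\tau}$. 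Since $\mathbf{Pic}_{X/S}$ is commutative, $H$ is automatically normal in $G$, so $\mathbf{NS}_{X/S}^{\tau}$ inherits a structure of commutative $S$-group scheme and the projection $p:G\to \mathbf{NS}_{X/S}^{\tau}$ is a homomorphism.

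First I would record that $p$ is faithfully flat, which is the standard feature of the quotient of a group scheme by a flat closed normal subgroup scheme and is built into the construction underlying Theorem \ref{teoRappre1}. Granting this, the three structural properties follow from descent. For flatness, the composite $G\to S$ is flat because $G$ is flat over $S$ by Notation \ref{notaNS}; since $p$ is faithfully flat, descent of flatness along faithfully flat morphisms forces $\mathbf{NS}_{X/S}^{\tau}\to S$ to be flat. For finiteness of type, $p$ is faithfully flat and quasi-compact and $G$ is of finite type over $S$, so $\mathbf{NS}_{X/S}^{\tau}$ is of finite type over $S$ by descent. For separatedness, I would use that a group scheme is separated over $S$ precisely when its unit section is a closed immersion: the preimage $p^{-1}(e(\mathbf{NS}_{X/S}^{\tau}))$ equals $H$, which is closed in $G$, and since closed subsets descend along the faithfully flat surjection $p$ the unit section of $\mathbf{NS}_{X/S}^{\tau}$ is a closed immersion.

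It remains to show that the formation of $\mathbf{NS}_{X/S}^{\tau}$ commutes with base change, and this I expect to be the most delicate point. Given $S'\to S$, the scheme $\mathbf{Pic}_{X/S}$, and with it both $H$ and $G$, commutes with base change, so on the category of $S'$-schemes the presheaf quotient $T\mapsto G(T)/H(T)$ coincides with $T\mapsto G_{S'}(T)/H_{S'}(T)$. I would then check that $\mathbf{NS}_{X/S}^{\tau}\times_S S'$ satisfies the universal property of $(G_{S'}/H_{S'})_{(fpqc)}$: the base-changed projection $p_{S'}:G_{S'}\to \mathbf{NS}_{X/S}^{\tau}\times_S S'$ stays faithfully flat and exhibits the target as the fpqc sheafification of the unchanged presheaf quotient, whence by uniqueness of the representing object one obtains a canonical isomorphism $\mathbf{NS}_{X/S}^{\tau}\times_S S'\simeq \mathbf{NS}_{X_{S'}/S'}^{\tau}$. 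The real obstacle throughout is to ensure that the representing scheme supplied by Theorem \ref{teoRappre1} is stable under base change and carries the descent properties; once faithful flatness of $p$ is secured, each of these reduces to a standard descent statement.
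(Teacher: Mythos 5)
Your proposal is correct and follows the paper's own route: representability comes from Theorem \ref{teoRappre1} applied to $H=\mathbf{Pic}^{0}_{X/S}\subset G=\mathbf{Pic}^{\tau}_{X/S}$, whose hypotheses (quasi-projectivity and finite type of $G$, properness and flatness of the closed subgroup $H$) you verify exactly as the paper intends via Notation \ref{notaNS}. The only difference is one of packaging: where the paper simply cites \cite{BER}, Proposition 9.2 for separatedness, flatness, finite type, compatibility with base change and faithful flatness of $p$, you rederive these properties by faithfully flat descent along $p$, which is a legitimate and somewhat more self-contained way of filling in the same citation.
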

\proof It exists by theorem \ref{teoRappre1},   then it is a $S$-group scheme, separated, flat and commutes with base changing according to \cite{BER}, Proposition 9.2, respectively (iv), (x), (xi) and (v).\endproof

\begin{defin}
We denote by $\mathbf{NS}_{X/S}^{\tau}$ the $S$-scheme representing ${NS_{X/S}^{\tau}}$ and we call it the torsion Néron-Severi scheme.
\end{defin}

\begin{prop} The torsion Néron-Severi scheme $\mathbf{NS}_{X/S}^{\tau}$ is finite and smooth over $S$. 
\end{prop}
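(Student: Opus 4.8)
The plan is to reduce everything to the fibres and to exploit the two structural facts already recorded in Proposition \ref{propNeronEsiste}: that forming $\mathbf{NS}_{X/S}^{\tau}$ commutes with base change and that the projection $p:\mathbf{Pic}_{X/S}^{\tau}\to\mathbf{NS}_{X/S}^{\tau}$ is faithfully flat. From that proposition I already know that the structural morphism $r:\mathbf{NS}_{X/S}^{\tau}\to S$ is separated, flat and of finite type; since $S$ is locally noetherian and $r$ is of finite type, $r$ is automatically of finite presentation. So what remains is to prove that $r$ is finite and that it is smooth, and for both assertions I would first analyse the fibres.

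First I would describe the fibre $\mathbf{NS}_{X/S}^{\tau}\times_S k(s)$ over a point $s\in S$. Because forming the torsion N\'eron-Severi scheme commutes with base change, this fibre is $\mathbf{NS}_{X_s/k(s)}^{\tau}=\mathbf{Pic}_{X_s/k(s)}^{\tau}/\mathbf{Pic}_{X_s/k(s)}^{0}$, the quotient taken over the field $k(s)$. Over a field the group scheme $\mathbf{Pic}_{X_s/k(s)}^{\tau}$ is of finite type (indeed proper, by our running hypotheses) and $\mathbf{Pic}_{X_s/k(s)}^{0}$ is its identity component, which is open and closed. Hence the quotient is the group scheme of connected components of $\mathbf{Pic}_{X_s/k(s)}^{\tau}$: it is finite, because a group scheme of finite type over a field has only finitely many connected components, and it is \'etale, because the component group scheme of a finite type group scheme over a field is always \'etale. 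Thus every fibre of $r$ is a finite \'etale $k(s)$-group scheme; in particular $r$ is quasi-finite and has smooth fibres.

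Next I would deduce finiteness. The morphism $r$ is separated, of finite type and has finite fibres, hence is quasi-finite; since a proper quasi-finite morphism is finite, it suffices to show that $r$ is proper. To obtain properness I would use that $p:\mathbf{Pic}_{X/S}^{\tau}\to\mathbf{NS}_{X/S}^{\tau}$ is faithfully flat, hence surjective, while $\mathbf{Pic}_{X/S}^{\tau}\to S$ is projective, hence universally closed. Since the composite $\mathbf{Pic}_{X/S}^{\tau}\to\mathbf{NS}_{X/S}^{\tau}\to S$ is universally closed and $p$ is surjective, the universal closedness descends to $r$ (if a composite $g\circ f$ is universally closed and $f$ is surjective, then $g$ is universally closed); combined with separatedness and finite type this shows that $r$ is proper, and therefore finite.

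Finally, smoothness follows formally: $r$ is flat and of finite presentation by Proposition \ref{propNeronEsiste}, and by the fibre analysis its geometric fibres are \'etale, hence smooth of relative dimension $0$, so that the fibrewise criterion for smoothness (a flat, finitely presented morphism with smooth fibres is smooth) applies. I expect the only delicate point to be the fibrewise assertion, namely that over a field $\mathbf{Pic}^{\tau}/\mathbf{Pic}^{0}$ is the finite \'etale component group scheme; this rests on the classical structure theory of the Picard scheme over a field. Everything else is the descent of universal closedness along the surjection $p$ together with the fibrewise criterion for smoothness.
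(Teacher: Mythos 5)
Your proposal is correct and follows essentially the same route as the paper: properness of $\mathbf{NS}^{\tau}_{X/S}\to S$ is obtained by descending it along the surjection $p$ from the proper scheme $\mathbf{Pic}^{\tau}_{X/S}$, the fibres are identified with the finite \'etale component group schemes $\pi_0(\mathbf{Pic}^{\tau}_{X_s/k(s)})$, smoothness follows from flatness plus smooth (\'etale) fibres, and finiteness from proper plus quasi-finite. The paper simply packages these steps as citations (Liu, Demazure--Gabriel, Milne) where you spell out the arguments.
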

\proof It is proper according to \cite{Liu}, Proposition 3.3.16 (f), since it is separated and of finite type over $S$, $p:\mathbf{Pic}_{X/S}^{\tau}\to \mathbf{NS}_{X/S}^{\tau}$ is surjective and  $\mathbf{Pic}_{X/S}^{\tau}$ is proper over $S$. For any  $s\in S$ then $\mathbf{NS}_{X_{k(s)}/{k(s)}}^{\tau}$ is the \'etale finite (then quasi finite) ${k(s)}$-group scheme of connected components $\pi_{0}(\mathbf{Pic}_{X_{k(s)}/{k(s)}}^{\tau})$ (cf. \cite{DemGab}, II, \S 5, n$^{\circ}$ 1). Thus $\mathbf{NS}^{\tau}_{X/S}$, being flat over $S$, is smooth. Since it is proper and quasi finite then it is moreover finite by \cite{Mil}, ch. 1, \S 1, Corollary 1.10.\endproof

Now consider the following diagram where horizontal arrows are exact sequences, $n>0$ is any integer, the first vertical line is an isogeny since $\mathbf{Pic}_{X/S}^{0}$ is an abelian scheme and everywhere we denote by ${}_nG$ the kernel $ker(n:G\to G)$ for any commutative group scheme $G$:

$$\xymatrix{ & 0 \ar[d]& 0 \ar[d]& 0 \ar[d]&\\& {}_n\mathbf{Pic}_{X/S}^{0} \ar[d] & {}_n\mathbf{Pic}_{X/S}^{\tau} \ar[d] & {}_n\mathbf{NS}_{X/S}^{\tau} \ar[d] & \\ 
0 \ar[r] & \mathbf{Pic}_{X/S}^{0} \ar[r] \ar[d]^{n} & \mathbf{Pic}_{X/S}^{\tau} \ar[r]\ar[d]^{n} &  \mathbf{NS}_{X/S}^{\tau} \ar[r]\ar[d]^{n}& 0 \\
0 \ar[r] & \mathbf{Pic}_{X/S}^{0} \ar[r] \ar[d] & \mathbf{Pic}_{X/S}^{\tau} \ar[r] &  \mathbf{NS}_{X/S}^{\tau} \ar[r]& 0 \\ & 0 & & &}$$

\noindent by the snake lemma (applied if we place ourselves in the abelian category of fpqc sheaves) we obtain the following exact sequence of finite commutative $S$-group schemes:

\begin{equation}\label{eqSuite1}\xymatrix{ 0\ar[r]& {}_n\mathbf{Pic}_{X/S}^{0} \ar[r] & {}_n\mathbf{Pic}_{X/S}^{\tau} \ar[r] & {}_n\mathbf{NS}_{X/S}^{\tau} \ar[r] & 0}\end{equation} 

Indeed ${}_n\mathbf{Pic}_{X/S}^{0}$ is faithfully flat over $S$, thus the projection ${}_n\mathbf{Pic}_{X/S}^{\tau}\to \mathbf{NS}_{X/S}^{\tau}$ is faithfully flat (\cite{BER}, Proposition 9.2, (xi)), so the sequence is an exact sequence of group schemes (\cite{Oort}, III, Property 17.1). Moreover ${}_n\mathbf{Pic}_{X/S}^{\tau}$ is finite since ${}_n\mathbf{Pic}_{X/S}^{0}$ and ${}_n\mathbf{NS}_{X/S}^{\tau}$ are finite, cf. \cite{BER}, Proposition 9.2, (viii)

\begin{rem}\label{ossNeronTorsione} Since $\mathbf{NS}_{X/S}^{\tau}$ is finite, let $N$ be its order, then for any positive integer $m$ we have 
${}_{N\cdot m}\mathbf{NS}_{X/S}^{\tau}=\mathbf{NS}_{X/S}^{\tau}$ since $(N\cdot m)\cdot 1_{\mathbf{NS}_{X/S}^{\tau}}$ is the zero map thus we have an exact sequence of finite and flat $S$-group schemes
\begin{equation}\label{eqSuiteFlat}\xymatrix{ 0\ar[r]& {}_{N\cdot m}\mathbf{Pic}_{X/S}^{0} \ar[r] & {}_{N\cdot m}\mathbf{Pic}_{X/S}^{\tau} \ar[r] & \mathbf{NS}_{X/S}^{\tau} \ar[r] & 0.}\end{equation}

\end{rem}
Now, dualizing the isogeny of previous diagram we obtain, by theorem \ref{teoDualSequence}, the following exact sequence

\begin{equation}\label{eqSuiteAbel}\xymatrix{ 0\ar[r]& ({}_n\mathbf{Pic}_{X/S}^{0})^{\vee} \ar[r] & \mathbf{Alb}_{X/S} \ar[r]^{n} & \mathbf{Alb}_{X/S} \ar[r] & 0}\end{equation} 
whence  \begin{equation}\label{eqIsoAbel}{}_n\mathbf{Alb}_{X/S}\simeq ({}_n\mathbf{Pic}_{X/S}^{0})^{\vee}.\end{equation}

\section{The abelian fundamental group scheme}
In \cite{Nor}, Nori defines the fundamental group scheme
$\pi_1(X,x)$ of a reduced, connected and proper scheme $X$ over a
perfect field $k$ provided with a rational point $x\in X(k)$ as the group
scheme associated to the neutral tannakian category
$(EF(X),\otimes,x^{\ast},\mathcal{O}_X)$ of essentially finite vector bundles. As pointed out in \cite{AnEm}, \S 1, the same construction holds over any base field $k$ with the additional assumption $H^0(X,\mathcal{O}_X)=k$. Nori in particular has shown  that the category of
torsors over $X$, under the action of finite group schemes, pointed above $x$ is filtered, and that
the fundamental group scheme $\pi_1(X, x)$ is the projective limit of the group schemes
occurring in these torsors. This led Nori to generalize the notion of fundamental
group scheme: a $k$-scheme $X$ pointed at
$x \in X(k)$ has a fundamental group scheme based at x if there exists a universal
torsor pointed above $x$ that dominates every torsor pointed above $x$ under the
action of a finite group scheme (cf. \cite{Nor2}, Chapter II, Definition 1).  Then Nori 
proves that this is equivalent as saying that the category of torsors under finite
group schemes over $X$ pointed above $x$ is filtered (cf. \cite{Nor2}, Chapter II, Poposition
1). This point of view has been generalized by Gasbarri who constructs in \cite{Gas} the fundamental group scheme $\pi_1(X, x)$ of an integral scheme $X$ over a connected Dedekind scheme $S$ provided with a $S$-valued point $x\in X(S)$ as the projective limit of the finite and flat group schemes occurring in torsors over $X$ pointed above $x$.

If instead of considering all torsors we only consider commutative torsors (i.e. torsors under the action of a finite, flat and commutative group scheme) then the projective limit of the group schemes occurring in them is called the abelianized (or simply \textit{abelian}) fundamental group scheme and denoted by $\pi_1^{ab}(X,x)$. There is a canonical morphism  $$\pi_1(X,x)\to\pi_1^{ab}(X,x),$$ which is not, in general, an isomorphism. However by a result of Nori it can sometimes be an isomorphism:

\begin{prop}\label{propGrupFondAbel} Let $S$ be a connected Dedekind scheme, $X\to S$ an abelian scheme and $0_X\in X(S)$ the identity element. Then its fundamental group scheme $\pi_1(X,0_X)$ is isomorphic to $\underleftarrow{lim}_{n\in\mathbb{N}}({}_n X)$ and then commutative.
\end{prop}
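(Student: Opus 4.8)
The plan is to show that for an abelian scheme $X\to S$, every torsor under a finite and flat $S$-group scheme pointed above $0_X$ is in fact a commutative torsor, so that the projective limits defining $\pi_1(X,0_X)$ and $\pi_1^{ab}(X,0_X)$ coincide, and then to identify this limit explicitly. The key input is Nori's theorem (cf. \cite{Nor}, \cite{Nor2}) that for an abelian variety over a field every finite pointed torsor is a torsor under a finite commutative group scheme and in fact arises as the pullback of a multiplication-by-$n$ isogeny; I would invoke the $S$-relative version of this. Concretely, the multiplication maps $n:X\to X$ are isogenies of abelian schemes with kernel ${}_nX$ finite and flat over $S$, so each $n:X\to X$ is a torsor under ${}_nX$ pointed above $0_X$. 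These form a projective system indexed by $\mathbb{N}$ ordered by divisibility, with transition maps induced by $m:X\to X$ whenever $n\mid$ the relevant product, and ${}_nX$ is commutative because an abelian scheme is commutative (as recalled just before the proposition). Thus $\underleftarrow{\lim}_{n}({}_nX)$ is a well-defined commutative profinite-flat group scheme and maps canonically to $\pi_1^{ab}(X,0_X)$.

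The heart of the argument is the reverse inclusion: that \emph{every} finite pointed torsor over $X$ is dominated by one of the multiplication isogenies, equivalently that the $n:X\to X$ form a cofinal (indeed the whole) system among finite pointed torsors. First I would reduce to the case where the base is a field by a standard specialization/flatness argument over the Dedekind scheme $S$, using that torsors over $X$ pointed above $0_X$ are classified by $\pi_1(X,0_X)$ and that formation of the multiplication kernels ${}_nX$ commutes with base change. Over a field, Nori's structure theorem for the fundamental group scheme of an abelian variety gives precisely $\pi_1(X,0_X)\simeq \underleftarrow{\lim}_n({}_nX)$; the main content is that there are no ``extra'' non-abelian finite torsors, which follows from the fact that a finite torsor $Y\to X$ pointed above $0_X$, where $Y$ is connected, must itself carry an abelian scheme structure making $Y\to X$ an isogeny, whence by the structure of isogenies it is bounded by a multiplication map.

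The main obstacle I anticipate is making the passage from the field case to the general Dedekind base clean, since $\pi_1$ over $S$ is built from torsors with $Y$ an $S$-scheme and flatness over $S$ is essential; one must check that an $S$-torsor whose generic and special fibers are all dominated by multiplication isogenies is globally so, and that the projective limit is unchanged. I would handle this by noting that ${}_nX$ is finite and flat over $S$ with formation commuting with base change, so the cofinality established fiberwise propagates, and that the identification $\pi_1(X,0_X)\simeq\underleftarrow{\lim}_n({}_nX)$ is compatible with the $S$-structure. Commutativity is then immediate since each ${}_nX$ is commutative and a projective limit of commutative group schemes is commutative, and the canonical morphism $\pi_1(X,0_X)\to\pi_1^{ab}(X,0_X)$ is an isomorphism because the defining systems agree.
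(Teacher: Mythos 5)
You should first be aware that the paper does not actually prove this proposition: it disposes of it in two lines by citing Nori's theorem for the field case and the author's earlier paper (\cite{Antei}, \S 2.2) for the extension to a Dedekind base. Your proposal is therefore an attempt to reconstruct the content of those two references, and its outline --- the multiplication maps $n:X\to X$ are pointed torsors under the finite flat commutative group schemes ${}_nX$, and the whole point is to show this system is cofinal among all finite pointed torsors --- is indeed the right one and matches what Nori and Antei do.

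The genuine gap is in the passage from the field case to the Dedekind base, which you correctly identify as the main obstacle but then dismiss with an argument that does not work. Saying that ``${}_nX$ is finite and flat over $S$ with formation commuting with base change, so the cofinality established fiberwise propagates'' is not a proof: the difficulty is not about ${}_nX$ at all. Given a pointed $G$-torsor $Y\to X$ over $S$ with $G$ finite and flat over $S$, Nori's theorem applied to the generic fiber produces a faithfully flat morphism $u_{\eta}:{}_nX_{\eta}\to G_{\eta}$ exhibiting $Y_{\eta}$ as a contracted product along $n:X_{\eta}\to X_{\eta}$. To dominate $Y$ over $S$ you must extend $u_{\eta}$ to a faithfully flat $S$-morphism ${}_nX\to G$ and identify $Y$ with the corresponding contracted product over $S$. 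The schematic-closure construction (take the closure of $\ker u_{\eta}$ in ${}_nX$ and form the quotient, exactly as in Theorem \ref{teoEstensioneTorsoriCommutativi} of this paper) only produces \emph{some} finite flat model $G'$ of $G_{\eta}$ and some pointed $G'$-torsor extending $Y_{\eta}$; concluding that $G'\simeq G$ and that the two extensions of $Y_{\eta}$ coincide is a nontrivial uniqueness statement for models of pointed torsors over $X/S$, and it is precisely this point that \cite{Antei}, \S 2.2 is invoked for. A secondary, more minor issue: your field-case argument that a connected finite pointed torsor over an abelian variety ``must itself carry an abelian scheme structure making it an isogeny'' is the Serre--Lang theorem, which for torsors under non-\'etale finite group schemes in positive characteristic is essentially the theorem of \cite{Nor3} itself rather than an input to it; citing Nori there is fine, but the sentence should not read as if this were an elementary observation.
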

\proof When $S$ is the spectrum of a field the result has been proved in \cite{Nor3} by Nori. The same is true when $S$ is a Dedekind scheme and $X$ an abelian $S$-scheme (cf. \cite{Antei}, \S 2.2). 
\endproof

In the next section we will study the $S$-group scheme $\pi_1^{ab}(X,x)$ for a quite general scheme $X$ over a Dedekind scheme.

\subsection{Commutative finite torsors}
\label{sez:Commutative}
Let $S$ be a connected Dedekind scheme, $X$ an integral scheme, $f:X\to S$ a faithfully flat morphism of finite type provided with a section $x:S\to X$ and such that $\mathcal{O}_S\simeq f_{\ast}(\mathcal{O}_X)$ holds universally. Under these assumptions the fundamental group scheme $\pi_1(X,x)$ is always defined. In section \ref{sez:Picard} we have discussed some existence theorems for $\mathbf{Pic}_{X/S}$, $\mathbf{Pic}_{X/S}^{\tau}$ and $\mathbf{Pic}_{X/S}^{0}$. However throughout this section we simply assume they exist and they satisfy all the conditions of  notation \ref{notaNS}. If $G$ is a finite, flat and commutative $S$-group scheme we consider $H^1(X,G):=H^1_{fpqc}(X,G)$ the group of isomorphism classes of $G$-torsors over $X$. Let $H^1_{\bullet}(X,G)$ denote the subgroup of $H^1(X,G)$ of isomorphism classes of $G$-torsors $Y$ over $X$ provided with a $S$-valued point $y\in Y_x(S)$. We prove the following proposition, implicitly suggested in \cite{SGA1}, XI (last page):

\begin{prop}\label{propCorrispondenza} Let $G$ be a finite, flat and commutative $S$-group scheme. Then there exists a group isomorphism between $H^1_{\bullet}(X,G)$ and\\ $Hom_S(G^{\vee}, \mathbf{Pic}_{X/S}^{\tau})$.\end{prop}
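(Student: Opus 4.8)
The plan is to build the map by hand and then identify it with an edge homomorphism of a spectral sequence coming from relative Cartier duality. First I would construct the forward map $\Phi\colon H^1_{\bullet}(X,G)\to Hom_S(G^{\vee},\mathbf{Pic}_{X/S})$ by pushing forward along characters. Over $X\times_S G^{\vee}$ there is a tautological character of $G$ (the universal point of $G^{\vee}=\mathcal{H}om_{S\text{-gr}}(G,\mathbb{G}_m)$), and contracting a pointed torsor $(Y,y)$ with it yields a line bundle on $X\times_S G^{\vee}$; the section $y\in Y_x(S)$ rigidifies it along $\{x\}\times G^{\vee}$, so it defines, functorially and multiplicatively in $G^{\vee}$, a homomorphism $G^{\vee}\to\mathbf{Pic}_{X/S}$. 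Since the contracted product of torsors corresponds to the tensor product of associated bundles, $\Phi$ is a homomorphism of groups. Because $G^{\vee}$ is finite it is annihilated by its order $n$, so the image of any such homomorphism lands in ${}_n\mathbf{Pic}_{X/S}\subseteq\mathbf{Pic}_{X/S}^{\tau}$; this explains why the target may be taken to be $\mathbf{Pic}_{X/S}^{\tau}$ and shows $Hom_S(G^{\vee},\mathbf{Pic}_{X/S}^{\tau})=Hom_S(G^{\vee},\mathbf{Pic}_{X/S})$.

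Next I would reduce the pointed group to a kernel. Under the standing hypothesis $\mathcal{O}_S\simeq f_{\ast}\mathcal{O}_X$ one has $G(X)=G(S)$, so every $G$-torsor automorphism is translation by a point of $G(S)$; since the set of sections of a torsor over $x$, when nonempty, is a $G(S)$-torsor, forgetting the rigidification identifies $H^1_{\bullet}(X,G)$ with $\ker\bigl(x^{\ast}\colon H^1(X,G)\to H^1(S,G)\bigr)$: a class lies in $\ker(x^{\ast})$ iff its restriction along $x$ admits a section, and two sections differ by a translation automorphism. Thus it suffices to prove $\ker(x^{\ast})\cong Hom_S(G^{\vee},\mathbf{Pic}_{X/S})$ and that this isomorphism is realized by $\Phi$.

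For the isomorphism I would invoke Cartier duality at the level of sheaves. As $G^{\vee}$ is finite and flat, $\mathcal{H}om_X(f^{\ast}G^{\vee},\mathbb{G}_m)=G$ and $\mathcal{E}xt^i_X(f^{\ast}G^{\vee},\mathbb{G}_m)=0$ for $i\geq 1$, so the local-to-global spectral sequence degenerates and gives $H^i(X,G)\cong\mathrm{Ext}^i_X(f^{\ast}G^{\vee},\mathbb{G}_m)$. Feeding this into the spectral sequence $\mathrm{Ext}^p_S(G^{\vee},R^qf_{\ast}\mathbb{G}_m)\Rightarrow\mathrm{Ext}^{p+q}_X(f^{\ast}G^{\vee},\mathbb{G}_m)$, and using $f_{\ast}\mathbb{G}_m=\mathbb{G}_{m,S}$ together with $R^1f_{\ast}\mathbb{G}_m=\mathbf{Pic}_{X/S}$ (theorem \ref{teoKleiman}, theorem \ref{teoPicard}), the terms of total degree $1$ yield the exact sequence
$$0\to H^1(S,G)\to H^1(X,G)\xrightarrow{\rho} Hom_S(G^{\vee},\mathbf{Pic}_{X/S})\xrightarrow{d_2}\mathrm{Ext}^2_S(G^{\vee},\mathbb{G}_m).$$
The section $x$ retracts $f$, hence splits off the contribution $R^0f_{\ast}\mathbb{G}_m=\mathbb{G}_m$; concretely it kills $d_2$ and identifies $\ker(x^{\ast})$ with all of $Hom_S(G^{\vee},\mathbf{Pic}_{X/S})$. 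Unwinding the edge map shows it coincides with $\Phi$, so $\Phi$ is the sought group isomorphism.

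The main obstacle is justifying the Cartier-duality inputs in this relative, possibly non-smooth setting: the vanishing $\mathcal{E}xt^i_X(f^{\ast}G^{\vee},\mathbb{G}_m)=0$ for $i\geq 1$ and the projection formula $Rf_{\ast}R\mathcal{H}om_X(f^{\ast}G^{\vee},\mathbb{G}_m)\simeq R\mathcal{H}om_S(G^{\vee},Rf_{\ast}\mathbb{G}_m)$, both resting on $G^{\vee}$ being finite and flat, together with the verification that the section exactly annihilates the obstruction $d_2$ into $\mathrm{Ext}^2_S(G^{\vee},\mathbb{G}_m)$. If one prefers a geometric argument for surjectivity, the same conclusion can be reached by reconstructing, from a homomorphism $\psi\colon G^{\vee}\to\mathbf{Pic}_{X/S}$, the pointed torsor directly: pull back the Poincar\'e sheaf along $1_X\times\psi$ to obtain a rigidified line bundle on $X\times_S G^{\vee}$ that is multiplicative in the $G^{\vee}$-direction (because $\psi$ is a homomorphism and the Poincar\'e sheaf is additive in the Picard variable), and then apply relative Cartier duality to convert this multiplicative datum back into a $G$-torsor; showing this reconstruction is well defined and inverse to $\Phi$ is the delicate point.
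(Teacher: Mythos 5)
Your proposal follows essentially the same route as the paper: the forward map is the same contracted-product-with-characters construction, the factorization through $\mathbf{Pic}^{\tau}_{X/S}$ uses the same torsion argument, and the key isomorphism $H^1(X,G)/H^1(S,G)\simeq Hom_S(G^{\vee},\mathbf{Pic}_{X/S})$ is precisely the statement the paper quotes as a black box from \cite{Mil}, III, Proposition 4.16, whose proof is the Ext spectral sequence you sketch. The only cosmetic difference is that you identify $H^1_{\bullet}(X,G)$ with $\ker(x^{\ast})$ via uniqueness of pointings up to automorphism, whereas the paper proves the equivalent identification with $H^1(X,G)/H^1(S,G)$ by twisting a torsor by the pullback of its fibre over $x$ (Lemma \ref{lemmaGrazieMichel}).
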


\noindent Proposition \ref{propCorrispondenza} actually holds over any base scheme $S$. In order to prove it we first need a lemma: 

\begin{lem}\label{lemmaGrazieMichel} Let $G$ be a finite, flat and commutative $S$-group scheme. Then the natural inclusion\footnote{Note that $H^1_{\bullet}(S,G)=\{0\}$.} $H^1_{\bullet}(X,G)\hookrightarrow H^1(X,G)/H^1(S,G)$ is a group isomorphism.
\end{lem}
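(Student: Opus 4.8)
The plan is to use the section $x:S\to X$, which splits the pullback map on torsor cohomology and reduces the lemma to an identification of $H^1_\bullet(X,G)$ with a kernel. First I would record the formal consequences of $f\circ x=\mathrm{id}_S$. Pulling back $G$-torsors along $f$ and along $x$ gives group homomorphisms $f^\ast:H^1(S,G)\to H^1(X,G)$ and $x^\ast:H^1(X,G)\to H^1(S,G)$ with $x^\ast\circ f^\ast=\mathrm{id}$. Hence $f^\ast$ is a split injection (which is precisely what makes the inclusion $H^1(S,G)\hookrightarrow H^1(X,G)$ and the quotient $H^1(X,G)/H^1(S,G)$ of the statement meaningful), and one obtains a direct sum decomposition
$$H^1(X,G)=f^\ast H^1(S,G)\oplus\ker(x^\ast),$$
the projection onto the second factor being $[W]\mapsto [W]-f^\ast x^\ast[W]$.

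Next I would identify $H^1_\bullet(X,G)$ with $\ker(x^\ast)$. For a $G$-torsor $Y\to X$ the fibre $Y_x:=x^\ast Y$ is a $G$-torsor over $S$ whose class is exactly $x^\ast[Y]$. By the footnote, $H^1_\bullet(S,G)=\{0\}$: a $G$-torsor over $S$ carrying an $S$-point is trivialised by that point. Thus $Y_x$ admits an $S$-point if and only if $x^\ast[Y]=0$, so that $[Y]\in H^1_\bullet(X,G)$, i.e. $Y_x(S)\neq\emptyset$, holds exactly when $[Y]\in\ker(x^\ast)$. In other words $H^1_\bullet(X,G)=\ker(x^\ast)$ as subgroups of $H^1(X,G)$.

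Finally I would combine the two steps. The composite
$$\ker(x^\ast)\hookrightarrow H^1(X,G)\twoheadrightarrow H^1(X,G)/f^\ast H^1(S,G)$$
is an isomorphism by the direct sum decomposition: it is injective because $\ker(x^\ast)\cap f^\ast H^1(S,G)=0$ (if $[Y]=f^\ast[Z]$ then $0=x^\ast[Y]=[Z]$), and surjective because $[W]\equiv [W]-f^\ast x^\ast[W]\pmod{f^\ast H^1(S,G)}$ with the right-hand side lying in $\ker(x^\ast)$. Since this composite is precisely the natural map of the statement, the lemma follows.

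I do not expect a serious obstacle here: the whole argument is formal once the section has been exploited. The only point needing (minimal) care is the identification in the second step, namely that $H^1_\bullet(X,G)$ really is the subgroup of classes whose restriction $x^\ast[Y]$ vanishes, and this rests on the elementary fact that a pointed $G$-torsor over the base is trivial.
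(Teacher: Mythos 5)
Your proof is correct and is essentially the paper's argument in a more structural package: the paper's correcting torsor $T:=Y_x\times_S X$ is exactly $f^\ast x^\ast Y$, so its step ``$Y\cdot T$ is pointed'' (proved there via the diagonal embedding into the contracted product) is precisely your projection $[W]\mapsto [W]-f^\ast x^\ast[W]$ onto $\ker(x^\ast)$, together with your identification $H^1_\bullet(X,G)=\ker(x^\ast)$. Your formulation has the minor advantage of making explicit that $H^1_\bullet(X,G)$ is a subgroup and that the map is injective, both of which the paper leaves implicit.
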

\proof We recall the multiplication law in $H^1(X,G)$ (more details can be found in \cite{DemGab}, III, \S 4, n$^{\circ}$ 3 and 4): let $Y, T\in H^1(X,G)$ then the (commutative) product $Y\cdot T$ of the two torsors is defined as the contracted product $Y\times_X^G T$. Note that $H^1(S,G)$ can be seen as the group of $G$-torsors over $X$ which are pull back of $G$-torsors over $S$. Now set $T:=Y_x\times_S X$, then clearly $T\in H^1(S,G)$. We follow \cite{Sko}, 2.2, page 22, (replacing $Spec(k)$ by $S$): the contracted product commutes with base change (cf. \cite{DemGab}, III, \S 4, n$^{\circ}$ 3, 3.1) then $(Y\times_X^G T)_{x}\simeq Y_x\times_S^G Y_x$. From the diagonal embedding $Y_x\hookrightarrow Y_x\times_S Y_x$ one deduces the closed immersion $$S=Y_x/G\hookrightarrow (Y_x\times_S Y_x)/G=Y_x\times_S^G Y_x=(Y\cdot T)_x.$$ In conclusion we have found a torsor $T\in H^1(S,G)$ such that $Y\cdot T\in H^1_{\bullet}(X,G)$ and this is enough to conclude.\endproof

\begin{proof}[Proof of Proposition] \ref{propCorrispondenza}. (see also \cite{AnGa}, 5.12 for a similar construction) 
Let  $q:Y\to X$ be a $G$-torsor and let $T$ be any $S$-scheme, we want to associate it a  morphism from $G^{\vee}(T)$ to $\mathbf{Pic}_{X/S}^{\tau}(T)$. We use the Cartier-Shatz formula $$G^{\vee}(T)\simeq Hom_T(G_T,{\mathbb{G}_{m}}_T)$$ (see for example \cite{Oort}, III.16, Theorem 16.1 but also \cite{WW} \S 2.4) where ${\mathbb{G}_{m}}_T$ is the multiplicative group scheme over $T$. So let $g\in G^{\vee}(T)$ and let $\gamma_x\in Hom_T(G_T,{\mathbb{G}_{m}}_T)$ be the corresponding morphism. Simply by base change $q_T:Y_T\to X_T$ is a $G_T$-torsor and if we consider the contracted product $$Y^{g}:=Y_T\times^{G_T} {\mathbb{G}_{m}}_T$$  through the morphism $\gamma_x$ we have then found a ${\mathbb{G}_{m}}_T$-torsor over $X_T$, thus an element of $H^1(X_T,{\mathbb{G}_{m}}_T)  \simeq Pic(X_T)$ (cf. \cite{Mil}, III, Proposition 4.9). Thus to $g$ we have associated an invertible sheaf $\mathcal{F}^{g}\in Pic(X_T)$; we denote by $\overline{\mathcal{F}^{g}}$ its image in $Pic(X_T)/Pic(T)\simeq \mathbf{Pic}_{X/S}(T)$ (cf. theorem \ref{teoKleiman}). Then we have constructed a morphism $$H^{1}(X,G)\to Hom_S(G^{\vee},\mathbf{Pic}_{X/S})$$ which passes to quotient $$\omega: H^{1}(X,G)/H^{1}(S,G)\to Hom_S(G^{\vee},\mathbf{Pic}_{X/S});$$ it is an isomorphism according to \cite{Mil}, III, Proposition 4.16. Moreover since $G^{\vee}$ is finite of order $m:=|G|$, thus $m:G^{\vee}\to G^{\vee}$ is the zero map so $Hom_S(G^{\vee},\mathbf{Pic}_{X/S})\simeq Hom_S(G^{\vee},\mathbf{Pic}^{\tau}_{X/S})$ and $\omega$ then factors through $$Hom_S(G^{\vee},\mathbf{Pic}^{\tau}_{X/S}).$$  Using lemma \ref{lemmaGrazieMichel} we finally obtain the isomorphism $$\hat{\omega}: H^1_{\bullet}(X,G)\simeq Hom_S(G^{\vee},\mathbf{Pic}_{X/S}^{\tau}).$$ \end{proof}

The previous proposition and what follows generalize what Nori did for abelian varieties in \cite{Nor3}. Here we give some details that in \cite{Nor3} are left to the reader. So observe, following the proof of previous proposition, that a morphism $\varphi:G^{\vee}\to \mathbf{Pic}^{\tau}_{X/S}$ can still be factored through ${}_m\mathbf{Pic}^{\tau}_{X/S}$ where $m=|G|$, then through ${}_{N\cdot m}\mathbf{Pic}^{\tau}_{X/S}$, where $N=|\mathbf{NS}^{\tau}_{X/S}|$, via the inclusion ${}_m\mathbf{Pic}^{\tau}_{X/S}\hookrightarrow {}_{N\cdot m}\mathbf{Pic}^{\tau}_{X/S}$, so in particular we have an isomorphism $$H^1_{\bullet}(X,G)\simeq Hom_S(G^{\vee},{}_{N\cdot m}\mathbf{Pic}_{X/S}^{\tau})$$ and consequently $$ H^1_{\bullet}(X,G)\simeq Hom_S(({}_{N\cdot m}\mathbf{Pic}_{X/S}^{\tau})^{\vee},G).$$
Let us give an interpretation to this last isomorphism. First of all we recall that $H^1_{\bullet}(X,G)\simeq Hom_S(\pi_1(X,x),G)\simeq Hom_S(\pi_1(X,x)^{ab},G)$ since $G$ is commutative, then we have 
$$(\star)\qquad\qquad Hom_S(\pi_1(X,x)^{ab},G)\simeq Hom_S(({}_{N\cdot m}\mathbf{Pic}_{X/S}^{\tau})^{\vee},G)$$
thus in particular 
$$Hom_S(\pi_1(X,x)^{ab},({}_{N\cdot m}\mathbf{Pic}_{X/S}^{\tau})^{\vee})\simeq Hom_S(({}_{N\cdot m}\mathbf{Pic}_{X/S}^{\tau})^{\vee},({}_{N\cdot m}\mathbf{Pic}_{X/S}^{\tau})^{\vee}).$$
Let $\rho_m^{\tau}:\pi_1(X,x)^{ab}\to ({}_{N\cdot m}\mathbf{Pic}_{X/S}^{\tau})^{\vee}$ be the morphism associated to $id_{({}_{N\cdot m}\mathbf{Pic}_{X/S}^{\tau})^{\vee}}$ thus we have a morphism $$\sigma:\pi_1(X,x)^{ab}\to \underleftarrow{lim}_{m\in\mathbb{N}}({}_{N\cdot m}\mathbf{Pic}_{X/S}^{\tau})^{\vee},$$ then using $(\star)$  we have the following
\begin{prop}\label{propLimitePicTau} Notation being as at the beginning of this section, the morphism $$\sigma:\pi_1(X,x)^{ab}\to \underleftarrow{lim}_{m\in\mathbb{N}}({}_{N\cdot m}\mathbf{Pic}_{X/S}^{\tau})^{\vee}$$ is an isomorphism. \end{prop}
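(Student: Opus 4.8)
The plan is to establish the statement by Yoneda's lemma in the category of pro-objects of finite, flat and commutative $S$-group schemes, in which both members of $\sigma$ naturally live. Set $P_m:=({}_{N\cdot m}\mathbf{Pic}_{X/S}^{\tau})^{\vee}$, so that the target of $\sigma$ is $P:=\underleftarrow{lim}_{m\in\mathbb{N}}P_m$; for $m\mid m'$ the transition morphism $P_{m'}\to P_m$ is the Cartier dual of the inclusion ${}_{N\cdot m}\mathbf{Pic}_{X/S}^{\tau}\hookrightarrow {}_{N\cdot m'}\mathbf{Pic}_{X/S}^{\tau}$. Each $P_m$ is finite and flat (dualize \eqref{eqSuiteFlat}), and $\pi_1(X,x)^{ab}$ is by its very definition a projective limit of finite, flat and commutative $S$-group schemes; so it suffices to show that for every finite, flat and commutative $S$-group scheme $G$ the map $$\sigma^{*}: Hom_S(P,G)\to Hom_S(\pi_1(X,x)^{ab},G),\qquad h\mapsto h\circ\sigma,$$ is bijective, a bijection on all these functors forcing $\sigma$ itself to be an isomorphism.

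To compute the source, note first that, $G$ being finite, any homomorphism from a cofiltered limit of finite group schemes to $G$ factors through a finite stage (the affine algebra of $G$ is a finite $\mathcal{O}_S$-module), whence $Hom_S(P,G)\simeq \underrightarrow{lim}_{m}Hom_S(P_m,G)$; on the other side $Hom_S(\pi_1(X,x)^{ab},G)\simeq H^1_{\bullet}(X,G)$ is the defining property of the abelian fundamental group scheme. Now fix $G$ of order $m_0:=|G|$. By Cartier duality and biduality $Hom_S(P_m,G)\simeq Hom_S(G^{\vee},{}_{N\cdot m}\mathbf{Pic}_{X/S}^{\tau})$, and since $G^{\vee}$ is annihilated by $m_0$ the image of any such morphism lies in ${}_{m_0}\mathbf{Pic}_{X/S}^{\tau}\subseteq {}_{N\cdot m_0}\mathbf{Pic}_{X/S}^{\tau}$. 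Hence along the cofinal system of multiples of $m_0$ the transition maps of the colimit are isomorphisms, the colimit stabilises, and $Hom_S(P,G)\simeq Hom_S(P_{m_0},G)$. By $(\star)$ applied to this $G$ the right-hand side is identified with $Hom_S(\pi_1(X,x)^{ab},G)$, and unravelling the definition of $\sigma$ — the projection $P\to P_{m_0}$ precomposed with $\sigma$ equals $\rho_{m_0}^{\tau}$, which corresponds to the identity under $(\star)$ — shows that this identification is precisely $\sigma^{*}$.

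It then follows that $\sigma^{*}$ is bijective for every finite, flat and commutative $G$. Since a pro-object of finite, flat and commutative $S$-group schemes is determined by the functor $G\mapsto Hom_S(-,G)$ it corepresents, and a morphism inducing a bijection on all these functors is an isomorphism, we conclude that $\sigma$ is an isomorphism. The delicate point is the index-matching in the stabilisation step: the isomorphism $(\star)$ rigidly ties the level $N\cdot m$ to the order of $G$, whereas $P$ is a limit over all levels. The crux is therefore to show, by the torsion-annihilation argument above, that the colimit computing $Hom_S(P,G)$ collapses exactly onto the level $m_0=|G|$ singled out by $(\star)$, and to verify that the resulting bijection is genuinely induced by $\sigma$ rather than being a merely abstract identification.
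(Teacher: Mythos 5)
Your proof is correct and takes essentially the same route as the paper: the paper's own argument consists precisely in invoking the natural isomorphism $(\star)$ for every finite, flat and commutative $G$ and concluding by Yoneda in the pro-category, and you have simply made explicit the two points it leaves to the reader, namely the stabilisation of $\underrightarrow{lim}_m Hom_S(({}_{N\cdot m}\mathbf{Pic}_{X/S}^{\tau})^{\vee},G)$ at the level $m_0=|G|$ via the annihilation of $G^{\vee}$ by its order, and the verification that the resulting bijection is induced by $\sigma$ because $\rho_{m_0}^{\tau}$ corresponds to the identity under $(\star)$. There is no discrepancy with the paper's proof.
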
 

When $X$ is a projective abelian scheme over $S$ we recover (cf. Proposition \ref{propGrupFondAbel}) that $$\pi_1(X,x)^{ab}\simeq \underleftarrow{lim}_{m\in\mathbb{N}}({}_m X)$$ 
as $N=1$ and  $({}_m\mathbf{Pic}_{X/S}^{\tau})^{\vee}\simeq ({}_m\mathbf{Pic}_{X/S}^{0})^{\vee}\simeq {}_m X$, which gives in fact the whole fundamental group scheme.

\begin{lem}\label{lemmamorfFaithFlat} The natural morphism $\varphi^{ab}:\pi_1(X,x)^{ab}\to \pi_1(\mathbf{Alb}_{X/S},0_{\mathbf{Alb}_{X/S}})$ is faithfully flat.
\end{lem}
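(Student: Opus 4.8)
The plan is to reduce everything to the explicit projective-limit presentations of the two fundamental group schemes and to a level-by-level application of Cartier duality. First I would record the two presentations. On the source, Proposition \ref{propLimitePicTau} gives $\pi_1(X,x)^{ab}\simeq \underleftarrow{lim}_{m}\,({}_{N\cdot m}\mathbf{Pic}^{\tau}_{X/S})^{\vee}$. On the target, since $\mathbf{Alb}_{X/S}$ is a projective abelian scheme, Proposition \ref{propGrupFondAbel} yields $\pi_1(\mathbf{Alb}_{X/S},0_{\mathbf{Alb}_{X/S}})\simeq \underleftarrow{lim}_{n}\,({}_{n}\mathbf{Alb}_{X/S})$, which is already commutative and hence equals its own abelianization; restricting to the cofinal subsystem $n=N\cdot m$ and using the isomorphism ${}_{N\cdot m}\mathbf{Alb}_{X/S}\simeq ({}_{N\cdot m}\mathbf{Pic}^{0}_{X/S})^{\vee}$ of (\ref{eqIsoAbel}), I obtain $\pi_1(\mathbf{Alb}_{X/S},0_{\mathbf{Alb}_{X/S}})\simeq \underleftarrow{lim}_{m}\,({}_{N\cdot m}\mathbf{Pic}^{0}_{X/S})^{\vee}$.

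Second, and this is the technical heart, I would identify $\varphi^{ab}$ at each finite level. Tracing $\varphi^{ab}$ through the isomorphisms $H^1_{\bullet}(-,G)\simeq Hom_S(\pi_1(-)^{ab},G)$ of the discussion preceding Proposition \ref{propLimitePicTau}, the homomorphism dual to $\varphi^{ab}$ is the pullback $\lambda^{\ast}:H^1_{\bullet}(\mathbf{Alb}_{X/S},G)\to H^1_{\bullet}(X,G)$. Under Proposition \ref{propCorrispondenza} this pullback corresponds to postcomposition with $\lambda^{\ast}:\mathbf{Pic}^{\tau}_{\mathbf{Alb}_{X/S}/S}\to \mathbf{Pic}^{\tau}_{X/S}$; since $\mathbf{Alb}_{X/S}$ is abelian, $\mathbf{Pic}^{\tau}_{\mathbf{Alb}_{X/S}/S}=\mathbf{Pic}^{0}_{\mathbf{Alb}_{X/S}/S}$ by Proposition \ref{propDualAbel}, and by Remark \ref{ossInclusioniPicchi} this map is the biduality isomorphism $\gamma$ followed by the inclusion $\mathbf{Pic}^{0}_{X/S}\hookrightarrow \mathbf{Pic}^{\tau}_{X/S}$. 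Consequently $\varphi^{ab}$ is the projective limit of the maps $\psi_m:({}_{N\cdot m}\mathbf{Pic}^{\tau}_{X/S})^{\vee}\to ({}_{N\cdot m}\mathbf{Pic}^{0}_{X/S})^{\vee}$ obtained by applying Cartier duality to the closed immersions ${}_{N\cdot m}\mathbf{Pic}^{0}_{X/S}\hookrightarrow {}_{N\cdot m}\mathbf{Pic}^{\tau}_{X/S}$ of the exact sequence (\ref{eqSuiteFlat}).

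Third, I would check that each $\psi_m$ is faithfully flat. As (\ref{eqSuiteFlat}) is a short exact sequence of finite and flat commutative $S$-group schemes and Cartier duality is an exact contravariant autoequivalence of that category, dualizing produces the exact sequence $0\to (\mathbf{NS}^{\tau}_{X/S})^{\vee}\to ({}_{N\cdot m}\mathbf{Pic}^{\tau}_{X/S})^{\vee}\xrightarrow{\psi_m} ({}_{N\cdot m}\mathbf{Pic}^{0}_{X/S})^{\vee}\to 0$; in particular $\psi_m$ is an fppf-epimorphism of finite flat group schemes, hence faithfully flat. Incidentally this already exhibits $\ker\psi_m\simeq(\mathbf{NS}^{\tau}_{X/S})^{\vee}$ independently of $m$, in agreement with the kernel predicted by the main theorem.

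Finally I pass to the limit. Working Zariski-locally on $S$ so that all the affine group schemes have coordinate Hopf algebras, $\varphi^{ab}$ becomes the filtered colimit $\varinjlim_m \psi_m^{\sharp}$ of the faithfully flat ring homomorphisms $\psi_m^{\sharp}$; since the transition maps of both systems are themselves Cartier duals of closed immersions with finite flat cokernel (hence faithfully flat) and a filtered colimit of faithfully flat ring maps is faithfully flat, I conclude that $\varphi^{ab}$ is faithfully flat. I expect the genuine obstacle to be the second step: pinning down that the functorially defined $\varphi^{ab}$ agrees level-by-level with the concrete Cartier-dualized Picard inclusions $\psi_m$ requires carefully matching the isomorphisms of Propositions \ref{propCorrispondenza} and \ref{propLimitePicTau} with the biduality identification of Remark \ref{ossInclusioniPicchi} and checking compatibility with the transition maps of the two limits; the limit passage itself is then routine.
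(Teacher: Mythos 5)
Your proposal is correct and follows essentially the same route as the paper: identify $\varphi^{ab}$ level-by-level with the Cartier duals of the closed immersions ${}_{N\cdot m}\mathbf{Pic}^{0}_{X/S}\hookrightarrow {}_{N\cdot m}\mathbf{Pic}^{\tau}_{X/S}$ coming from the exact sequence (\ref{eqSuite1})/(\ref{eqSuiteFlat}), observe that duals of closed immersions of finite flat group schemes are faithfully flat, and pass to the limit. The paper compresses your second and fourth steps into the criterion that $\varphi^{ab}$ is faithfully flat if and only if each composite $\rho^{0}_m\circ\varphi^{ab}$ is, but the mathematical content is the same.
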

\proof All the morphisms $({}_{N\cdot n}\mathbf{Pic}_{X/S}^{\tau})^{\vee}$ $\to $ $({}_{N\cdot m}\mathbf{Pic}_{X/S}^{\tau})^{\vee}$ are faithfully flat for all positive (comparable) integers $n, m$ since the dual morphisms are closed immersions then the morphism $\rho^{\tau}_m:\pi_1(X,x)^{ab}\to ({}_{N\cdot m}\mathbf{Pic}_{X/S}^{\tau})^{\vee}$ is faithfully flat too. Similarly the canonical map $\rho^{0}_m:\pi_1(\mathbf{Alb}_{X/S},0_{\mathbf{Alb}_{X/S}})\to ({}_m\mathbf{Pic}_{X/S}^{0})^{\vee}$ is faithfully flat for every positive $m\in \mathbb{N}$. Now $\varphi^{ab}$ is faithfully flat if and only if for every $m$ the morhism $\rho^{0}_m\circ \varphi^{ab}$ is faithfully flat. But $\rho^{0}_m\circ \varphi^{ab}$ factors into $\rho^{\tau}_m$ (which is faithfully flat) and $({}_{N\cdot m}\mathbf{Pic}_{X/S}^{\tau})^{\vee}\to ({}_m\mathbf{Pic}_{X/S}^{0})^{\vee}$ which is faithfully flat too since the dual is a closed immersion (see the exact sequence (\ref{eqSuite1})). \endproof

\begin{teo}\label{teoUnico} We have the following exact sequence of commutative group schemes 
$$\xymatrix{0\ar[r] &(\mathbf{NS}^{\tau}_{X/S})^{\vee}\ar[r] &\pi_1(X,x)^{ab}\ar[r]^(.4){\varphi^{ab}} & \pi_1(\mathbf{Alb}_{X/S},0_{\mathbf{Alb}_{X/S}}) \ar[r] & 0.}$$ 
\end{teo}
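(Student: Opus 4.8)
The plan is to realize the asserted sequence as the inverse limit over $m$ of the Cartier duals of the finite-level sequences \eqref{eqSuiteFlat}, identifying the limits by means of the isomorphisms already established and deducing surjectivity from Lemma \ref{lemmamorfFaithFlat}. Concretely, I would first apply Cartier duality to the exact sequence \eqref{eqSuiteFlat} of finite and flat commutative $S$-group schemes. Since the Cartier dual is an exact contravariant functor on such group schemes, for every positive integer $m$ one obtains a short exact sequence
$$\xymatrix{0\ar[r] & (\mathbf{NS}^{\tau}_{X/S})^{\vee} \ar[r] & ({}_{N\cdot m}\mathbf{Pic}^{\tau}_{X/S})^{\vee} \ar[r]^{\psi_m} & ({}_{N\cdot m}\mathbf{Pic}^{0}_{X/S})^{\vee} \ar[r] & 0,}$$
where $\psi_m$ is dual to the inclusion ${}_{N\cdot m}\mathbf{Pic}^{0}_{X/S}\hookrightarrow {}_{N\cdot m}\mathbf{Pic}^{\tau}_{X/S}$. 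As $m$ varies (ordered by divisibility) these form an inverse system: on the middle and right terms the transition maps are the duals of the inclusions ${}_{N\cdot m}\mathbf{Pic}^{\tau}\hookrightarrow {}_{N\cdot m'}\mathbf{Pic}^{\tau}$ and ${}_{N\cdot m}\mathbf{Pic}^{0}\hookrightarrow {}_{N\cdot m'}\mathbf{Pic}^{0}$, whereas on the leftmost term they are the identity, because the fixed projection $\mathbf{Pic}^{\tau}_{X/S}\to\mathbf{NS}^{\tau}_{X/S}$ restricts to the same surjection onto $\mathbf{NS}^{\tau}_{X/S}$ on each ${}_{N\cdot m}\mathbf{Pic}^{\tau}_{X/S}$ (Remark \ref{ossNeronTorsione}).

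Next I would pass to the inverse limit. By Proposition \ref{propLimitePicTau} the limit of the middle terms is $\pi_1(X,x)^{ab}$, and by \eqref{eqIsoAbel} together with Proposition \ref{propGrupFondAbel} the limit of the right-hand terms is $\underleftarrow{lim}_m({}_{N\cdot m}\mathbf{Alb}_{X/S})=\pi_1(\mathbf{Alb}_{X/S},0_{\mathbf{Alb}_{X/S}})$, the systems indexed by $m$ and by $n=N\cdot m$ being cofinal. To identify the kernel of $\varphi^{ab}$ I would argue on $T$-points: under the isomorphism $\sigma$ a point $g\in\pi_1(X,x)^{ab}(T)$ is a compatible system $(\rho^{\tau}_m(g))_m$, and by the factorization $\rho^{0}_m\circ\varphi^{ab}=\psi_m\circ\rho^{\tau}_m$ recorded in the proof of Lemma \ref{lemmamorfFaithFlat} one has $\varphi^{ab}(g)=0$ if and only if $\rho^{\tau}_m(g)$ lies in $(\mathbf{NS}^{\tau}_{X/S})^{\vee}$ for all $m$. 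Since the transition maps on the left terms are the identity, such a system is precisely a single $T$-point of $(\mathbf{NS}^{\tau}_{X/S})^{\vee}$; hence $\ker\varphi^{ab}\simeq(\mathbf{NS}^{\tau}_{X/S})^{\vee}$, the inclusion being a closed immersion as an inverse limit of closed immersions. Finally, surjectivity of $\varphi^{ab}$ is exactly Lemma \ref{lemmamorfFaithFlat}, which supplies exactness on the right and completes the sequence.

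The step I expect to be the main obstacle is precisely this right-exactness: inverse limits are only left exact, so surjectivity of $\varphi^{ab}$ cannot be read off from the finite-level surjections $\psi_m$, since an inverse limit of epimorphisms need not be an epimorphism. This is why faithful flatness is the correct property to track, being stable under the relevant limits, and it is isolated beforehand in Lemma \ref{lemmamorfFaithFlat}. A subsidiary technical point is to ensure that the inverse limit of the projections $\psi_m$ genuinely equals the morphism $\varphi^{ab}$ induced by $\lambda:X\to\mathbf{Alb}_{X/S}$, that is, the compatibility $\rho^{0}_m\circ\varphi^{ab}=\psi_m\circ\rho^{\tau}_m$; this identification, which traces through the functorial description of $\varphi^{ab}$, is already available from the proof of Lemma \ref{lemmamorfFaithFlat}.
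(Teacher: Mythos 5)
Your proposal is correct and follows essentially the same route as the paper: dualize the sequence \eqref{eqSuiteFlat}, pass to the inverse limit to identify the kernel with $(\mathbf{NS}^{\tau}_{X/S})^{\vee}$, and obtain surjectivity from Lemma \ref{lemmamorfFaithFlat}. Your explicit observation that right-exactness cannot be read off from the finite-level surjections alone (inverse limits being only left exact) makes precise a point the paper leaves implicit.
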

\proof  According to lemma \ref{lemmamorfFaithFlat} we only need to prove that $ker (\varphi^{ab})\simeq (\mathbf{NS}^{\tau}_{X/S})^{\vee}$; so we dualize the exact sequence (\ref{eqSuiteFlat}) thus obtaining another exact sequence of finite and flat commutative $S$-group schemes:
$$\xymatrix{0 \ar[r] &(\mathbf{NS}_{X/S}^{\tau})^{\vee} \ar[r] & ({}_{N\cdot m}\mathbf{Pic}_{X/S}^{\tau})^{\vee} \ar[r] & ({}_{N\cdot m}\mathbf{Pic}_{X/S}^{0})^{\vee}  \ar[r]& 0 }$$ thus passing to the inverse limit we obtain

$$\xymatrix{0 \ar[r] & (\mathbf{NS}_{X/S}^{\tau})^{\vee} \ar[r] & \pi_1(X,x)^{ab} \ar[r]^(.4){\varphi^{ab}} &  \pi_1(\mathbf{Alb}_{X/S},0_{\mathbf{Alb}_{X/S}})   \ar[r]& 0}$$ 

\endproof

In next corollary we discuss the situation where every finite commutative pointed $G$-torsor over $X$ comes from its Albanese scheme (i.e. it is the pull back of  finite commutative pointed $G$-torsor over $\mathbf{Alb}_{X/S}$). 

\begin{cor}\label{corUnico1} When moreover $\mathbf{Pic}_{X/S}^{\tau}=\mathbf{Pic}_{X/S}^{0}$, then  $$\pi_1(X,x)^{ab} \simeq  \pi_1(\mathbf{Alb}_{X/S},0_{\mathbf{Alb}_{X/S}}).$$
\end{cor}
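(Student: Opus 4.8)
The plan is to read the result off directly from the exact sequence of Theorem \ref{teoUnico}, once we understand what the extra hypothesis does to its kernel. In that sequence the only object standing between $\pi_1(X,x)^{ab}$ and $\pi_1(\mathbf{Alb}_{X/S},0_{\mathbf{Alb}_{X/S}})$ is the Cartier dual $(\mathbf{NS}^{\tau}_{X/S})^{\vee}$, so the entire task reduces to showing that this $S$-group scheme is trivial under the assumption $\mathbf{Pic}_{X/S}^{\tau}=\mathbf{Pic}_{X/S}^{0}$.

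First I would recall that $\mathbf{NS}_{X/S}^{\tau}$ represents the fpqc quotient sheaf associated to $\mathbf{Pic}_{X/S}^{\tau}/\mathbf{Pic}_{X/S}^{0}$. When $\mathbf{Pic}_{X/S}^{\tau}=\mathbf{Pic}_{X/S}^{0}$ this quotient is the trivial sheaf, whence the representing group scheme $\mathbf{NS}_{X/S}^{\tau}$ is the trivial (zero) $S$-group scheme. Consequently its Cartier dual $(\mathbf{NS}^{\tau}_{X/S})^{\vee}$ is trivial as well. Plugging this into Theorem \ref{teoUnico}, the kernel of $\varphi^{ab}$ vanishes, so $\varphi^{ab}:\pi_1(X,x)^{ab}\to \pi_1(\mathbf{Alb}_{X/S},0_{\mathbf{Alb}_{X/S}})$ is a monomorphism. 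Since Lemma \ref{lemmamorfFaithFlat} already gives that $\varphi^{ab}$ is faithfully flat, and a faithfully flat monomorphism of group schemes is an isomorphism, I would conclude $\pi_1(X,x)^{ab}\simeq \pi_1(\mathbf{Alb}_{X/S},0_{\mathbf{Alb}_{X/S}})$.

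There is essentially no genuine obstacle here, as the statement is a formal consequence of the exact sequence already established; the corollary merely records the degenerate case in which the torsion N\'eron--Severi scheme disappears. The only point deserving a line of care is the passage from the triviality of the quotient sheaf to the triviality of the representing scheme and of its Cartier dual; the rest is exactness bookkeeping. One may also note, as a sanity check, that this recovers the abelian-scheme case discussed after Proposition \ref{propLimitePicTau}, where $N=1$ and the comparison map is already known to be an isomorphism, and it specializes to the curve case treated in Corollary \ref{corCurve} via Theorem \ref{teoPicardMumf}.
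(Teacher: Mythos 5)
Your proposal is correct and follows exactly the paper's argument: the paper's proof is the one-line observation that under the hypothesis $\mathbf{NS}^{\tau}_{X/S}=S$ is the trivial group scheme, so the kernel in Theorem \ref{teoUnico} vanishes and the faithfully flat map $\varphi^{ab}$ is an isomorphism. You simply spell out the same exactness bookkeeping in more detail.
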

\proof Indeed in this case $\mathbf{NS}^{\tau}_{X/S}=S$, the trivial group scheme. \endproof

\subsection{Effective examples}
\label{sez:Effective}
Let $S$ be a Dedekind scheme, $X$ an integral scheme, $f:X\to S$ a faithfully flat morphism of finite type provided with a section $x:S\to X$ and such that $\mathcal{O}_S\simeq f_{\ast}(\mathcal{O}_X)$ holds universally. In section \ref{sez:Commutative} we have needed to assume the existence of $\mathbf{Pic}_{X/S}$, $\mathbf{Pic}_{X/S}^{0}$ and $\mathbf{Pic}_{X/S}^{\tau}$ satisfying certain conditions (cf. notation \ref{notaNS}) in order to prove in a more general possible setting our results. In this section  we will not  assume their existence anymore and we will describe some cases where they certainly exist and they are nice enough so that theorem \ref{teoUnico} can be successfully applied.

\medskip

\indent \textit{1. Curves}

\medskip

\noindent The following corollary generalizes a classical result stating that the abelianized \'etale fundamental group of a curve over a separably closed field is isomorphic to the \'etale fundamental group of its Jacobian (cf. \cite{Mil2}, \S 9). This is almost a restatement of 
theorem \ref{teoUnico} in the case of curves:
\begin{cor}\label{corCurve} Let $S$ be a Dedekind scheme and $f:C\to S$  a smooth and projective curve  with integral geometric fibers. Assume moreover the existence of a $S$-valued point $x\in C(S)$. Let $J:=J_{C/S}$ be the Jacobian of $C$ and $i:C\to J$ the canonical morphism sending  $x$ to $0_J\in J(S)$.  Then the natural morphism $\varphi:\pi_1(C,x)^{ab}\to \pi_1(J,0_J)$ of $S$-group schemes is an isomorphism.
\end{cor}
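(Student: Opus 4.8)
The plan is to exhibit Corollary \ref{corCurve} as a direct specialization of Corollary \ref{corUnico1}, the whole point being that a relative curve is precisely the situation in which the torsion N\'eron--Severi scheme collapses. First I would check that the data $(S,C,f,x)$ meet all the standing hypotheses of Section \ref{sez:Commutative} and of Notation \ref{notaNS}. Since $f$ is smooth and projective with integral (hence reduced and connected) geometric fibers, $C$ is integral and, by Remark \ref{ossUniversal}, $\mathcal{O}_S\simeq f_{\ast}\mathcal{O}_C$ holds universally; together with the section $x\in C(S)$ this guarantees that $\pi_1(C,x)$ and $\pi_1(C,x)^{ab}$ are defined. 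Theorem \ref{teoPicard} then ensures that $\mathbf{Pic}_{C/S}$ exists, is separated, and represents all the relevant Picard functors.

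The crucial input is Theorem \ref{teoPicardMumf}: for a smooth projective curve with integral geometric fibers one has
$$J_{C/S}=\mathbf{Pic}^0_{C/S}=\mathbf{Pic}^{\tau}_{C/S},$$
and this common scheme is a projective abelian scheme carrying the canonical principal polarization $\theta:J_{C/S}\to J_{C/S}^{\ast}$. This verifies all of Notation \ref{notaNS} at once: both $\mathbf{Pic}^0_{C/S}$ and $\mathbf{Pic}^{\tau}_{C/S}$ exist as open and closed subgroup schemes, $\mathbf{Pic}^0_{C/S}$ is a projective abelian scheme, and $\mathbf{Pic}^{\tau}_{C/S}$, being equal to it, is projective and flat over $S$. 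Simultaneously it forces $\mathbf{NS}^{\tau}_{C/S}=S$, the trivial group scheme.

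With these facts in hand, Corollary \ref{corUnico1} applies verbatim and yields an isomorphism $\pi_1(C,x)^{ab}\simeq \pi_1(\mathbf{Alb}_{C/S},0_{\mathbf{Alb}_{C/S}})$ induced by $\varphi^{ab}$. It then remains only to translate this back into the language of the Jacobian. Using the self-duality $\theta$ I would identify $\mathbf{Alb}_{C/S}=(\mathbf{Pic}^0_{C/S})^{\ast}=J_{C/S}^{\ast}\simeq J_{C/S}=J$, as recorded in the remark following Definition \ref{defJacobian}, and check that under this identification the canonical morphism $\lambda:C\to \mathbf{Alb}_{C/S}$ of Proposition \ref{propAlbanese} agrees with the map $i:C\to J$ sending $x$ to $0_J$. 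The two induced morphisms on abelian fundamental group schemes then coincide, so $\varphi=\varphi^{ab}$ is the asserted isomorphism.

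I expect the only genuinely delicate step to be this last compatibility: verifying that the abstract Albanese map $\lambda$, built in Proposition \ref{propAlbanese} from the Poincar\'e sheaf and a translation, matches the classical Abel--Jacobi morphism $i$ once $\mathbf{Alb}_{C/S}$ is identified with $J$ via the polarization. Everything else is a matter of unwinding definitions, since the substantive geometric content---that curves carry no torsion N\'eron--Severi part---is already packaged in Theorem \ref{teoPicardMumf}.
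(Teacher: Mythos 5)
Your proposal is correct and follows exactly the paper's route: the paper proves this corollary by citing Theorem \ref{teoPicardMumf} (which gives $\mathbf{Pic}^0_{C/S}=\mathbf{Pic}^{\tau}_{C/S}$, hence trivial torsion N\'eron--Severi scheme) together with Corollary \ref{corUnico1}. The additional details you supply --- verifying Notation \ref{notaNS} and identifying $\mathbf{Alb}_{C/S}$ with $J$ via the self-duality $\theta$ --- are exactly the unwinding the paper leaves implicit.
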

\proof This is a consequence of theorem \ref{teoPicardMumf} and corollary \ref{corUnico1}.
\endproof

\medskip

\indent \textit{2. Characteristic zero}\medskip

\noindent When $S$ is a Dedekind scheme of characteristic zero, then it is sufficient to take $f:X\to S$ projective and smooth. Then the existence of $\mathbf{Pic}_{X/S}$ is assured for example if $f$ has integral geometric fibers. In this case for every $s\in S$ all the $\mathbf{Pic}_{X_{k(s)}/k(s)}^0$ are smooth of the same dimension (cf. remark \ref{ossPicCarZero}) then use theorems  \ref{teoesistenza1} and \ref{teoesistenza2} to conclude. When $S$ is connected $\mathbf{Pic}^{\tau}_{X/S}$ is smooth since $\mathbf{Pic}_{X/S}$ is (cf. \cite{KL}, Remark 9.5.21).

\medskip

\indent \textit{3. Schemes over a complete d.v.r. of mixed characteristic}\medskip

\noindent  Let $S:=Spec(R)$ where  $R$ is a complete d.v.r. of mixed characteristic $(0,p)$ with field of fractions  $K$ and algebraically closed residue field $k$. Let $t$ be an uniformizing parameter of $R$, $v$ the valuation normalized by $v(t)=1$ and $e:=v(p)$ the absolute ramification index. Let $X\to S$ be a smooth and proper scheme and let $X_{\eta}$ and $X_{s}$ be respectively the generic and special fibers; then $\mathbf{Pic}_{X/R}$ exists (it is an algebraic space according to \cite{BLR}, \S 8.3, Theorem 1, thus a group scheme according to \cite{Ray2}, Th\'eor\`eme 3.3.1) and represents the relative Picard functor $Pic_{X/S}$. As $char(K)=0$ we have already seen that $\mathbf{Pic}_{X_{\eta}/K}^0$ is smooth; assume for a moment that $\mathbf{Pic}_{X_s/k}^0$ is smooth too then we still have to verify that $dim (\mathbf{Pic}_{X_{\eta}/K}^0)=dim (\mathbf{Pic}_{X_s/k}^0)$. This will be ensured by the following theorem due to Raynaud: 

\begin{teo}\label{teoRaunaud} Assume $e<p-1$, then $\mathbf{Pic}_{X/R}^0$ exists as a projective abelian scheme over $R$. Moreover $\mathbf{Pic}_{X/R}^{\tau}$ is a $R$-flat group scheme of finite type.
\end{teo}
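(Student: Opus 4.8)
The plan is to take the second assertion directly from Raynaud's specialization of the Picard functor and to deduce the first from it together with Theorem \ref{teoesistenza1}. Since it has already been recalled that $\mathbf{Pic}_{X/R}$ exists, is separated over $R$ and represents $Pic_{(X/R)(fppf)}$, the only hypothesis of Theorem \ref{teoesistenza1} still to be verified is that the identity components of the Picard schemes of the two fibres are smooth of the same dimension; recall that $\mathrm{Spec}(R)$ has only the generic point $\eta$ and the closed point $s$. The generic fibre is immediate: as $char(K)=0$, the scheme $\mathbf{Pic}^0_{X_\eta/K}$ is smooth by remark \ref{ossPicCarZero}. Two facts thus remain to be established on the closed fibre, where $char(k)=p>0$: the equality of dimensions, and the smoothness of $\mathbf{Pic}^0_{X_s/k}$, which may a priori fail to be reduced.

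Both facts come from Raynaud's theorem under the bound $e<p-1$ (cf. \cite{Ray2} and \cite{BLR}). I would first record the second assertion of the statement, namely that $\mathbf{Pic}^\tau_{X/R}$ exists and is an $R$-flat group scheme of finite type; this is precisely Raynaud's result. A flat group scheme of finite type over the discrete valuation ring $R$ has generic and special fibres of the same dimension, so $\dim \mathbf{Pic}^\tau_{X_\eta/K}=\dim \mathbf{Pic}^\tau_{X_s/k}$, and since on each fibre $\mathbf{Pic}^0$ is the identity component of $\mathbf{Pic}^\tau$, of finite index, this yields $\dim \mathbf{Pic}^0_{X_\eta/K}=\dim \mathbf{Pic}^0_{X_s/k}$. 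The decisive and genuinely hard point is the smoothness of $\mathbf{Pic}^0_{X_s/k}$: this cannot be argued formally and is exactly where $e<p-1$ is used, Raynaud showing that under this bound the obstruction to the reducedness of $\mathbf{Pic}^0_{X_s/k}$ vanishes. This step is the main obstacle; the bound $e<p-1$ is sharp, so no purely formal substitute for Raynaud's theorem can be expected.

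With both fibres now smooth of the same dimension, Theorem \ref{teoesistenza1} applies and gives that $\mathbf{Pic}^0_{X/R}$ exists and is of finite type over $R$, that it is smooth over $R$ because $R$ is reduced, and that it is closed in $\mathbf{Pic}_{X/R}$ and projective over $R$ because $R$ is noetherian. Its geometric fibres are then smooth, connected, proper commutative group schemes, hence abelian varieties, so $\mathbf{Pic}^0_{X/R}$ is a projective abelian scheme over $R$; this is the first assertion. The second assertion having already been used as the flatness input furnished by Raynaud's theorem, the argument would be complete.
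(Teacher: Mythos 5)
Your proposal is correct and follows essentially the same route as the paper: the hard content (flatness of $\mathbf{Pic}^{\tau}_{X/R}$ and smoothness of $\mathbf{Pic}^0_{X_s/k}$ under $e<p-1$) is deferred to Raynaud's Th\'eor\`eme 1 in \cite{Ray}, and the existence, smoothness over $R$ and projectivity of $\mathbf{Pic}^0_{X/R}$ are then obtained from Theorem \ref{teoesistenza1}. You merely spell out the intermediate verifications (equality of fibre dimensions via flatness over the d.v.r., and the passage from smooth proper irreducible fibres to the abelian-scheme conclusion) that the paper leaves implicit.
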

\proof  Cf. \cite{Ray}, Th\'eor\`eme 1. Then use Theorem \ref{teoesistenza1} for the projectivity of $\mathbf{Pic}_{X/R}^0$ over $R$.\endproof

If we don't want to assume $\mathbf{Pic}_{X_s/k}^0$ to be smooth we can give sufficient and necessary conditions on $X$ implying the smoothness of $\mathbf{Pic}_{X_s/k}$ (then of $\mathbf{Pic}_{X_s/k}^0$, cf. \cite{DemGab}, II,\S 5, n$^{\circ}$ 2, 2.1). Indeed according to \cite{mumLec}, Lecture 27 (see also \cite{KL}, Remark 9.5.15)   $\mathbf{Pic}_{X_s/k}$ is smooth if and only if all the Serre's Bockstein operations $\beta_i$ vanish, where $$\beta_1:H^1(X_s, \mathcal{O}_{X_s})\to H^2(X_s, \mathcal{O}_{X_s}),\qquad \beta_i:ker(\beta_{i-1})\to coker(\beta_{i-1})\text{ for } i\geq 2$$
These operations always vanish, for instance, when $H^2(X_s, \mathcal{O}_{X_s})=\{0\}$.

\medskip

\indent \textit{4. Schemes over a field.}

\medskip

\noindent Let $S$ be the spectrum of a field $k$: $\mathbf{Pic}_{X/S}$ always exist when $f:X\to S$ is proper (cf. cf. \cite{Murre}, II.15, Theorem 2). The identity component $\mathbf{Pic}^0_{X/S}$ is a closed and open group subscheme of finite type of $\mathbf{Pic}_{X/S}$ and it is geometrically irreducible (cf. \cite{KL}, Proposition 9.5.3) and it is projective at least when $f$ is projective and smooth (cf. \cite{KL}, Exercise 9.5.7). The smoothness of $\mathbf{Pic}^0_{X/S}$ is clear in the zero characteristic case which has already been treated in point 2. When $char (k)>0$ one can add the Serre's Bockstein conditions (see point 3) to ensure the smoothness of $\mathbf{Pic}_{X/S}$. As for $\mathbf{Pic}^{\tau}_{X/S}$, by \cite{KL}, Proposition 9.6.12 it is an open closed group subscheme of finite type of $\mathbf{Pic}_{X/S}$ when $f:X\to S$ is projective and it is smooth (then flat) when the Serre's Bockstein conditions imply the smoothness of $\mathbf{Pic}_{X/S}$.

\subsection{Extension of commutative finite torsors}
\label{sez:Extension}

Let $S$ be a connected Dedekind scheme of generic point $\eta=Spec(K)$. Let $X$ be an integral scheme and $f:X\to S$ a faithfully flat morphism of finite type provided with a section $x\in X(S)$ and such that $f_{\ast}(\mathcal{O}_X)\simeq \mathcal{O}_S$ holds universally. Assume moreover that $\mathbf{Pic}_{X/S}$, $\mathbf{Pic}_{X/S}^{0}$ and $\mathbf{Pic}_{X/S}^{\tau}$ exist and that satisfy all the conditions of  notation \ref{notaNS} (in section \ref{sez:Effective} we have discussed some examples where this always happens). For any point $s\in S$  consider the  fiber $f_{s}:X_{s}\to Spec(k(s))$ which is naturally provided with a section $x_{s}\in X_{s}(k(s))$. According to proposition \ref{propLimitePicTau} we have the isomorphism
$$\pi_1(X,x)^{ab}\simeq \underleftarrow{lim}_{m\in\mathbb{N}}({}_{N\cdot m}\mathbf{Pic}_{X/S}^{\tau})^{\vee},$$
then clearly, since $\mathbf{Pic}_{X/S}^{\tau}$ commutes with base change, this implies, whenever $X_s$ has a fundamental group scheme, the isomorphism
$$(\pi_1(X,x)^{ab})_{s}\simeq \underleftarrow{lim}_{{N\cdot m}\in\mathbb{N}}({}_{N\cdot m}\mathbf{Pic}_{X_{s}/k(s)}^{\tau})^{\vee}\simeq \pi_1(X_{s},x_{s})^{ab}.$$
From now on we only consider the case $s=\eta$. Let $G$ be a finite and commutative $K$-group scheme. For a \textit{quotient} pointed $G$-torsor $q:Y\to X_{\eta}$ we mean a pointed torsor whose associated morphism $\pi_1(X_{\eta},x_{\eta})\to G$ is faithfully flat.  Using standard techniques (cf. for instance \cite{Antei}, lemma 2.7) we are able to prove the following 
  
\begin{teo}\label{teoEstensioneTorsoriCommutativi}  Let $G$ be a finite and commutative $K$-group scheme. Then every quotient pointed $G$-torsor $q:Y\to X_{\eta}$ can be extended to a pointed  $G'$-torsor $Z\to X$ for some (necessarily commutative) model $G'$ of $G$ finite and flat over $S$. 
\end{teo}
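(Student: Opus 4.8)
The plan is to rephrase the problem in terms of the torsion Picard scheme by means of Proposition~\ref{propCorrispondenza} and then to extend the resulting closed subgroup scheme by taking its schematic closure over the Dedekind base $S$.

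First I would put $m:=|G|$ and $N:=|\mathbf{NS}^{\tau}_{X/S}|$ and recall, from the discussion following Proposition~\ref{propCorrispondenza}, that the pointed $G$-torsor $q:Y\to X_{\eta}$ corresponds under the isomorphism $H^1_{\bullet}(X_{\eta},G)\simeq Hom_K(G^{\vee},{}_{N\cdot m}\mathbf{Pic}^{\tau}_{X_{\eta}/K})$ to a morphism $\phi:G^{\vee}\to {}_{N\cdot m}\mathbf{Pic}^{\tau}_{X_{\eta}/K}$ of finite $K$-group schemes. The hypothesis that $Y$ is a \emph{quotient} torsor, i.e.\ that the attached morphism $\pi_1(X_{\eta},x_{\eta})^{ab}\to G$ is faithfully flat, is precisely what forces $\phi$ to be a closed immersion: since by Proposition~\ref{propLimitePicTau} this morphism factors through the faithfully flat projection onto $({}_{N\cdot m}\mathbf{Pic}^{\tau}_{X_{\eta}/K})^{\vee}$, the induced map $({}_{N\cdot m}\mathbf{Pic}^{\tau}_{X_{\eta}/K})^{\vee}\to G$ is itself faithfully flat, and dualizing it via Cartier duality yields the closed immersion $\phi$.

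Next, using that $\mathbf{Pic}^{\tau}_{X/S}$ commutes with base change, I view $\phi$ as a closed immersion $G^{\vee}\hookrightarrow ({}_{N\cdot m}\mathbf{Pic}^{\tau}_{X/S})_{\eta}$. The essential observation is that by the exact sequence~\eqref{eqSuiteFlat} the $S$-group scheme ${}_{N\cdot m}\mathbf{Pic}^{\tau}_{X/S}$ is finite and flat over $S$. I then take the schematic closure $\overline{G^{\vee}}$ of $G^{\vee}$ inside ${}_{N\cdot m}\mathbf{Pic}^{\tau}_{X/S}$. Because $S$ is Dedekind, $\overline{G^{\vee}}$ is flat over $S$ (its structure sheaf is $\mathcal{O}_S$-torsion-free, hence flat), and being a closed subscheme of a finite $S$-scheme it is automatically finite over $S$, with generic fiber $G^{\vee}$. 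A standard schematic-density argument, the generic fiber being schematically dense in the flat closure, shows that $\overline{G^{\vee}}$ is a closed, necessarily commutative, subgroup scheme of ${}_{N\cdot m}\mathbf{Pic}^{\tau}_{X/S}$. Setting $G':=(\overline{G^{\vee}})^{\vee}$ I obtain a finite, flat and commutative $S$-group scheme whose generic fiber is $(G^{\vee})^{\vee}=G$, that is, a model of $G$.

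Finally, the closed immersion $(G')^{\vee}=\overline{G^{\vee}}\hookrightarrow {}_{N\cdot m}\mathbf{Pic}^{\tau}_{X/S}\hookrightarrow \mathbf{Pic}^{\tau}_{X/S}$ is an element of $Hom_S((G')^{\vee},\mathbf{Pic}^{\tau}_{X/S})$ and hence, by Proposition~\ref{propCorrispondenza} applied over $S$, produces a pointed $G'$-torsor $Z\to X$. To conclude I must check that $Z$ restricts to $Y$ over the generic point; this holds because the isomorphism of Proposition~\ref{propCorrispondenza} is built out of contracted products and the Cartier-Shatz formula, both compatible with the base change $\eta\to S$, so that $Z_{\eta}$ is exactly the pointed $G$-torsor attached to $\phi$, namely $Y$. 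I expect the heart of the argument to lie in this last paragraph together with the construction of $\overline{G^{\vee}}$: verifying that the schematic closure is a flat closed subgroup scheme (this is where the Dedekind hypothesis on $S$ is indispensable) and that the correspondence of Proposition~\ref{propCorrispondenza} is genuinely compatible with restriction to the generic fiber, so that the extended torsor really recovers the given $Y$.
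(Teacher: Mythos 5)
Your proof is correct and is essentially the paper's argument transported across Cartier duality: the paper takes the schematic closure $\overline{D}$ of $D=\ker\bigl(u:({}_{N\cdot m}\mathbf{Pic}^{\tau}_{X_{\eta}/K})^{\vee}\twoheadrightarrow G\bigr)$ inside $({}_{N\cdot m}\mathbf{Pic}^{\tau}_{X/S})^{\vee}$ and sets $G':=({}_{N\cdot m}\mathbf{Pic}^{\tau}_{X/S})^{\vee}/\overline{D}$, which is the same model as your $(\overline{G^{\vee}})^{\vee}$ because $(G')^{\vee}$ is the unique flat closed subgroup scheme of ${}_{N\cdot m}\mathbf{Pic}^{\tau}_{X/S}$ with generic fibre $G^{\vee}$. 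The only real difference is the last step, where the paper produces the extended torsor by composing $\pi_1(X,x)\to({}_{N\cdot m}\mathbf{Pic}^{\tau}_{X/S})^{\vee}\to G'$ rather than applying Proposition~\ref{propCorrispondenza} over $S$ and checking compatibility with restriction to $\eta$, but these amount to the same verification.
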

\proof
The morphism $\pi_1(X_{\eta},x_{\eta})\to G$ factors through $({}_{N\cdot m}\mathbf{Pic}_{X_{\eta}/k(\eta)}^{\tau})^{\vee}$, where $m=|G|$. So let denote by $u:({}_{N\cdot m}\mathbf{Pic}_{X_{\eta}/k(\eta)}^{\tau})^{\vee} \to G$ the canonical faithfully flat morphism coming from previous factorisation and set $D:=ker(u)$; then construct the scheme theoretic closure $\overline{D}$ of $D$ in $({}_{N\cdot m}\mathbf{Pic}_{X/S}^{\tau})^{\vee}$ and consider the quotient $G':=({}_{N\cdot m}\mathbf{Pic}_{X/S}^{\tau})^{\vee}/\overline{D}$, then we have the following diagram:
 $$\xymatrix{D\ar[r] \ar@{^{(}->}[d]& \overline{D} \ar@{^{(}->}[d]\\
 ({}_{N\cdot m}\mathbf{Pic}_{X_{\eta}/k(\eta)}^{\tau})^{\vee}\ar@{->>}[d]_{u}\ar[r] & ({}_{N\cdot m}\mathbf{Pic}_{X/S}^{\tau})^{\vee}\ar@{->>}[d]\\
 G\ar[r]\ar[d] & G'\ar[d]\\
 {\eta}\ar[r] & S,}$$ and composing $({}_{N\cdot m}\mathbf{Pic}_{X/S}^{\tau})^{\vee}\to G'$ with $\pi_1(X,x)\to ({}_{N\cdot m}\mathbf{Pic}_{X/S}^{\tau})^{\vee}$ we conclude.
\endproof

\newpage

\medskip

\scriptsize

\begin{flushright} Marco Antei\\ 
Hausdorff Center for Mathematics\\Bonn, Germany\\
E-mail: \texttt{antei@math.univ-lille1.fr}\\ \texttt{marco.antei@gmail.com}\\
\end{flushright}

\end{document}